\newtheorem{thm}{Theorem}
\newtheorem{dfn}[thm]{Definition}
\newtheorem{lem}[thm]{Lemma}
\newtheorem{exa}[thm]{Example}
\newtheorem{prop}[thm]{Proposition}
\newtheorem{rem}[thm]{Remark}
\newcommand{\hide}[1]{}
\newcommand{\C}{\mathds{C}}
\newcommand{\R}{\mathds{R}}
\newcommand{\N}{\mathds{N}}
\newcommand{\cB}{\mathcal{B}}
\newcommand{\cC}{\mathcal{C}}
\newcommand{\cF}{\mathcal{F}}
\newcommand{\cP}{\mathcal{P}}
\newcommand{\cD}{\mathcal{D}}
\newcommand{\cK}{\mathcal{K}}
\newcommand{\cG}{\mathcal{G}}
\newcommand{\cS}{\mathcal{S}}
\newcommand{\cT}{\mathcal{T}}
\newcommand{\ii}{\mathsf{i}}
\newcommand{\cI}{\mathcal{I}}
\author{Konrad Schm\"udgen}
\address{Universit\"at Leipzig, Mathematisches Institut, Augustusplatz 10/11, D-04109 Leipzig, Germany}
\email{schmuedgen@math.uni-leipzig.de}
\date{}
\begin{document}

\begin{abstract}
This paper deals with  the  moment problem on a (not necessarily finitely generated) commutative unital real algebra $A$. We define  moment functionals  on $A$ as linear functionals which can be written as  integrals over characters of $A$ with respect to cylinder measures. Our main results provide such integral representations for $A_+$--positive linear functionals (generalized Haviland theorem) and for positive functionals fulfilling Carleman conditions. As an application we solve  the moment problem for the symmetric algebra $S(V)$ of a real vector space $V$. As a byproduct we obtain  a new approaches to 
the moment problem on $S(V)$ for a nuclear  space $V$ and to the integral decomposition of continuous positive functionals on a barrelled nuclear topological algebra $A$.
\end{abstract}

\maketitle

\textbf{AMS  Subject  Classification (2010)}.
 44 A 60 (Primary), 46 G 12, 28 C 20 (Secondary).\\

\textbf{Key  words:}  moment problem, cylinder measure, symmetric algebra, nuclear space, Carleman condition

\section{Introduction}
The moment problem on $\R^d$ is a well-studied  classical research
topic, see e.g. \cite{Ak}, \cite{berg84}, \cite{schm2017}. In modern formulation,  it is the following question: When can a linear functional  on the polynomial algebra $\R[x_1,\dots,x_d]$ (or on a finitely generated commutative real algebra $A$) be expressed as an integral over point evaluations with respect to some Radon measure on $\R^d$ (or on the  character space $\hat{A}$)?

This paper is about the moment problem in infinitely many variables. Suppose that $A$ is a (not necessarily finitely generated)  commutative unital real algebra. We study the following generalization of the classical moment problem: 

{\it When is a linear functional $L$ on $A$ an integral over characters of $A$? }

\noindent
If $A$ is finitely generated,  Haviland's theorem \cite[Theorem 1.14]{schm2017} provides an answer: it holds if and only if $L$ is $A_+$-positive, that is, $L$ is nonnegative on  elements of $A$ that are
nonnegative on all characters. Note that the latter condition is difficult to verify in general. Further, if  $L$ is only  nonnegative on squares and  satisfies  Carleman growth conditions, then it is a moment functional \cite[Theorem ]{schm2017}. The aim of the present paper is to generalize these two fundamental  results to   not necessarily finitely generated  algebras.  

From the conceptional side, our  new contribution  is that we do not define moment functionals by integrals with respect to  Radon measures, but only as integrals over cylinder measures.  This allows us to solve moment problems for more general algebras without nuclearity assumptions. 

Our main result in this respect   states that under corresponding  natural assumptions a linear functional $L$ on $A$ is an integral of characters over a cylinder measure. 

The next natural question is: When is $L$ an integral of some ($\sigma$-additive) measure?  From Minlos' theorem it follows that this holds if some   nuclearity assumption is added. As an immediate consequence  we obtain new and unifying approaches for two fundamental  results: the moment problem for  symmetric algebras over  nuclear spaces (Theorem \ref{mpsymmetrc}) and the  decomposition of continuous positive  functionals on barrelled nuclear topological algebras as integrals over characters (Theorem \ref{intcharalgebra}).
Note that for both results the known proofs contained in the literature are quite involved and based on different techniques.

This paper is organized as follows. Section \ref{cylindersec}  reviews basic definitions and results on cylinder measures. In Section \ref{momentfunccylsec} we  define moment functionals with respect to cylinder measures.  Section \ref{haviland} contains the main results of this paper. We prove an  integral representation theorem (Theorem \ref{mpscylinder}) 
with respect to cylinder measures for $A_+$-positive functionals and for positive functionals satisfying  Carleman conditions. In Section \ref{nucasssec} we assume that the underlying space is nuclear. Then, combined with Minlos's theorem, we obtain integral representations  by  "ordinary" $\sigma$-additive measures (Theorems \ref{nucearass} and \ref{intcharalgebra}). 
In Section \ref{mpsymmetricalgebras} we treat the moment problem for the symmetric algebra $S(V)$ of a real vector space $V$ (Theorem \ref{mpsymmetrc}).
 Section \ref{Gausssec} deals with moment functionals with respect to Gaussian cylinder measures. We give examples which have  representing measures and examples for which there are only representing cylinder measures, but  no representing measures.
 
 All results in this paper are developed for a {\it real} commutative algebra $A$, but they are easily extended to {\it complex} commutative $*$-algebras by passing to the  complexification $A_\C$ of $A.$  
Recall that the direct sum $A_\C:=A\oplus {\ii} A$ of vector  spaces $A$ and ${\ii} A$ becomes a complex commutative    $*$-algebra, called the   complexification  of $A$, with multiplication, involution  and  scalar multiplication defined by 
\begin{align*}
(a+{\ii} b)(c+{\ii}d)=ac-bd+ &{\ii}(bc+ad),~~ (a+{\ii} b)^*:=a-{\ii} b,\\ (\alpha+ {\ii}\beta)(a+ {\ii} b)&:= \alpha a-\beta b + {\ii} (\alpha b +\beta a),
\end{align*}
where $a,b,c,d\in A$ and $\alpha,\beta \in \R$. Each $\R$-linear functional $L:A\to \R$ extends uniquely to a $\C$-linear functional $L_\C:A_\C\to \C$ by $L_\C(a+{\ii}b)=L(a)+{\ii}L(b)$, $a,b\in A$. Clearly,\,  $L(a^2)\geq 0$ for all $a\in A$ if and only if\, $L_\C(x^*x)\geq 0$ for all $x\in A_\C$. 

Infinite-dimensional moment problems were first studied by A. G. Kostyuchenko and B.S. Mityagin \cite{km}, 
Y. M. Berezansky and S.N. Sifrin \cite{bs},  
  H.J. Borchers and J. Yngvason \cite{by} for symmetric algebras over nuclear spaces and since then  by various authors, see   \cite{sifrin}, \cite{cs}, \cite{by2}, \cite{bk}, \cite{heg}, \cite{Sch1990}, \cite{ah}, \cite{gik}, \cite{ikr}, \cite{aljorkim}.  
 Infinite-dimensional versions  of the moment theorem for compact semi-algebraic sets  \cite{Sch1991} are 
 obtained 
 in  \cite{ah}, \cite{aljorkim}.

\section{Cylinder measures}\label{cylindersec} 

In this section we recall some basics on cylinder measures; our standard reference is \cite[Chapter IV]{gw4}. 

Suppose that $V$ is  a real locally convex Hausdorff space. Let $V'$ denote the vector space of all continuous real-valued linear functionals on $V$.

Suppose that $\cF$ is a {\it finite-dimensional} linear subspace  of $V$. Its annihilator space $\cF^\circ$ is defined by 
 $$
 \cF^\circ=\{ L\in V': L(f)=0 ~~{\rm for}~f\in \cF\}.
 $$ 
The quotient space $V/\cF^\circ$ is also finite-dimensional. If $M$ is a subset of $V/\cF^\circ$, then the set $N$ of all $L\in V'$ which are mapped into elements of $M$ under the canonical mapping of $V'$ into $V'/ \cF^\circ$ is called the {\it cylinder set with base $M$ and generating subspace $V'/ \cF^\circ$}. The cylinder sets form an  algebra  of sets denoted by $\cC(V)$. Let $\cB(V)$ denote the $\sigma$-algebra generated by all cylinder sets. Clearly, $\cB(V)$ is the Borel $\sigma$-algebra when the vector space $V$ is equipped with the weak topology $\sigma(V,V')$.

 From \cite[Chapter IV, \S 1,3.]{gw4}  we restate the following  definition.
 \begin{dfn}\label{cmd}
 A \emph{cylinder measure} on $V'$ is a real-valued function $\mu$ on the algebra $\cC(V)$ such that:
 \begin{itemize}
 \item[(i)] $0\leq \mu(N)\leq 1$ for all $N\in \cC(V)$.
 \item[(ii)]   $\mu(V')=1$.
 \item[(iii)] If $N$ is the union of pairwise disjoint  sets $N_n\in \cC(V)$, $ n\in \N$, with Borel base sets and a common generating subspace $V'/ \cF^\circ$, then
 $$
 \mu(N)=\sum_{n=1}^\infty \mu(N_n).
 $$
 \end{itemize} 
 \end{dfn}
 In \cite{gw4} there is a fourth condition, but it follows from the three others.

\begin{dfn}\label{defconti} 
A cylinder measure $\mu$ on $V'$ is called \emph{continuous} if for each $\varepsilon >0$ and $a>0$ there exists a zero neighbourhood $U$  in $V$ such that for all $t\in U$ we have
\begin{align}
\mu\big( \{ \chi\in V': |\chi(t)|\geq a\}\big)\leq \varepsilon.
\end{align}
\end{dfn}
This  definition of continuity is equivalent to the definition given in \cite{gw4}, as noted in \cite[Ch. IV, \S 1, 4.]{gw4} and proved in \cite{minlos}. It is easily seen that we   may restrict ourselves to the case when $a=1$.

Now suppose that $\mu$ is a cylinder measure on $V'$. Then $\mu$ defines a normalized measure $\nu_\cF$  on the Borel $\sigma$-algebra of  each factor space $V'/ \cF^\circ$  by 
\begin{align}\label{nufmun}
 \nu_\cF(M):=\mu(N),
 \end{align} 
 where $N$ is the cylinder set with base $M$ and generating space $V'/ \cF^\circ$. 
 These measures $\nu_\cF$ are not independent of each others. If $\cG$ is another finite-dimensional subspace of $V'$ such that $\cF\subseteq \cG$, then for each Borel set $M$ of $V'/\cF^\circ$ we have 
\begin{align}\label{compcylicner}
\nu_\cF(M)= \nu_\cG(\pi^{-1}(M))
\end{align} 
where $\pi$ denotes the canonical map of $V'/\cG^\circ$ into $V'/\cF^\circ$ given by $\pi(L+\cG^\circ)=L+\cF^\circ,$ $L\in V'$.
Note that (\ref{compcylicner}) is a compatibility condition for the measures $\nu_\cF$. That is, if (\ref{compcylicner}) holds for a family of normalized measures $\nu_\cF$ indexed by the  finite-dimensional subspaces $\cF$ of $V$, then it comes from a cylinder measure $\mu$, defined by  (\ref{nufmun}), on $V'$ (see e.g. \cite[Chapter IV, \S 1, 5.]{gw4}. 

Cylinder measures are defined only on the algebra $\cC(V)$ of cylinder sets and they are not $\sigma$-additive in general. Hence it is natural to look for conditions on $V$ which imply a cylinder measure $\mu$ on $V'$ is $\sigma$-additive on $\cC(V)$. In this case $\mu$ extends to an "ordinary" $\sigma$-additive measure on the $\sigma$-algebra $\cB(V)$. 

In what follows   nuclear operators and   nuclear spaces will play a crucial role. 
Definitions and basic facts on these notions can be found (for instance) in  \cite[Chapter I, \S 3]{gw4} or  \cite[Chapter  III, 7.]{schaefer}. We briefly repeat.

Let $(V_1,\langle \cdot,\cdot\rangle_1)$ and  $(V_2,\langle \cdot,\cdot\rangle_2)$ be Hilbert  spaces. An operator $b:V_1\to V_2$ is called {\it nuclear}  if there exist sequences $(\varphi_n)_{n\in \N}$ of $V_1$ and $\psi_n)_{n\in \N}$ of $V_2$ such that 
\begin{align*}
\sum_{n=1}^\infty \|\varphi_n\|_1\, \|\psi_n\|_2<\infty\quad \rm{and}\quad b=\sum_{n=1}^\infty \langle \cdot,\varphi_n\rangle_1 \psi_n.
\end{align*}
A positive self-adjoint operator $b:V\to V$ of a Hilbert space $(V,\langle \cdot,\cdot\rangle)$ is nuclear, equivalently of {\it trace class}, if and only if\,
$\sum_{i\in I} \langle b\eta_i,\eta_i\rangle <\infty$ for some (then for each) orthonormal basis $(\eta_i)_{i\in I}$ of $V$.

Let $(V_n,\langle \cdot,\cdot\rangle_n), n\in \N,$  be Hilbert spaces such  that $V_n\subseteq V_m$ and $\|\cdot\|_m\leq \|\cdot\|_n$ for all $n\geq m.$ The vector space $V=\cap_n V_n$, equipped with the locally convex topology defined by the family of norms $\|\cdot\|_n, n\in \N$, is called a {\it $\sigma$-Hilbert space}. Such a space $V$ is called {\it nuclear} if for each $n\in \N$ there exists an $m\in \N$ such that the embedding of the Hilbert space $(V_m,\langle \cdot,\cdot\rangle_m)$ into $(V_n,\langle \cdot,\cdot\rangle_n)$ is nuclear. 

Note that the Schwartz space $\cS(\R^d)$ is a nuclear $\sigma$-Hilbert space. Indeed, let $N$ denote the number operator with domain $\cD(T)$ in the Hilbert space $(L^2(\R^d),\langle \cdot,\cdot\rangle)$. Then $\cS(\R^d)=\cap_n V_n$, where $V_n:=\cD(T^n)$ and $\langle \cdot,\cdot\rangle_n:= \langle N^n \cdot ,N^n \cdot\rangle_n, n\in \N.$

An important and deep result  is the following {\it Minlos' theorem}. 
\begin{thm}\label{minlosthm}
Suppose that $V$ is a nuclear locally convex Hausdorff space. Then each continuous cylinder measure on $V'$ is $\sigma$-additive and extends to a measure on the $\sigma$-algebra $\cB(V)$.
\end{thm}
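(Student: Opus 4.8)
\emph{Proof sketch.} The plan is to prove that a continuous cylinder measure $\mu$ on $V'$ is $\sigma$-additive on the algebra $\cC(V)$; Carathéodory's extension theorem then produces the extension to $\cB(V)$. Equivalently, I must show that $\mu$ is continuous from above at $\emptyset$ on $\cC(V)$. The first step is to encode $\mu$ in its characteristic functional $\hat\mu(v):=\int_{V'} e^{\,\ii\,\chi(v)}\,d\mu(\chi)$, $v\in V$, which makes sense because $\mu$ restricts to a genuine probability measure $\nu_{\cF}$ on each finite-dimensional quotient $V'/\cF^\circ$. A Lévy-type argument (from $1-\cos x\le\tfrac12 x^2$ together with the companion bound $1-\cos x\ge c>0$ for $|x|\ge a$) converts the continuity of $\mu$ in the sense of Definition \ref{defconti} into the estimate: for every $\varepsilon>0$ there is a continuous Hilbertian seminorm $p$ on $V$ — here one uses that a nuclear space possesses a fundamental system of Hilbertian seminorms — and a constant $C_\varepsilon>0$ such that $\operatorname{Re}\,(1-\hat\mu(v))\le\varepsilon+C_\varepsilon\,p(v)^2$ for all $v\in V$.

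Next I use nuclearity properly: given such a $p$, choose a continuous Hilbertian seminorm $q\ge p$ on $V$ for which the canonical map $\iota\colon V_q\to V_p$ between the associated local pre-Hilbert spaces is Hilbert--Schmidt, and put $\kappa:=\|\iota\|_{\mathrm{HS}}^2<\infty$. Fix a finite-dimensional subspace $\cF\subseteq V$ and a basis $e_1,\dots,e_n$ of $\cF$ orthonormal for the inner product inducing $q$; then $\sum_{i=1}^n p(e_i)^2\le\kappa$, with $\kappa$ independent of $\cF$ and $n$. Averaging $\hat\mu$ over a centred Gaussian vector $g=(g_1,\dots,g_n)$ of variance $t^2$ on $\cF$ (expectation $\mathbb{E}$) yields the identity $\mathbb{E}\,\hat\mu\big(\sum_i g_i e_i\big)=\int_{V'} e^{-\frac{t^2}{2}\sum_i\chi(e_i)^2}\,d\nu_{\cF}(\chi)$; bounding the left side with the quadratic estimate above and with $\mathbb{E}\,p(\sum_i g_i e_i)^2=t^2\sum_i p(e_i)^2\le t^2\kappa$ gives
\[
\int_{V'}\Big(1-e^{-\frac{t^2}{2}\sum_i\chi(e_i)^2}\Big)\,d\nu_{\cF}\ \le\ \varepsilon+C_\varepsilon t^2\kappa .
\]
Since $1-e^{-s}\ge\tfrac12$ for $s\ge1$, choosing $\varepsilon$ small and then $t$ small produces, for every $\delta>0$, a radius $R>0$ \emph{independent of $\cF$} with $\nu_{\cF}\big(\{\chi:\sum_i\chi(e_i)^2>R^2\}\big)\le\delta$.

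This uniform tightness is the geometric heart of the argument. Set $K_R:=\{\chi\in V':|\chi(v)|\le R\,q(v)\text{ for all }v\in V\}$; by Banach--Alaoglu/Tychonoff $K_R$ is $\sigma(V',V)$-compact, and since the quotient maps $V'\to V'/\cF^\circ$ are $\sigma(V',V)$-continuous, cylinder sets with closed base are $\sigma(V',V)$-closed. By Hahn--Banach with respect to $q$, the previous estimate says exactly that any $N\in\cC(V)$ with $N\cap K_R=\emptyset$ satisfies $\mu(N)\le\delta$. Now suppose $N_k\in\cC(V)$ with $N_k\downarrow\emptyset$ but $\mu(N_k)\ge\alpha>0$ for all $k$. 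By inner regularity of the finite-dimensional measures $\nu_{\cF}$ one passes, after the standard replacement of the $N_k$ by the finite intersections $W_k:=\bigcap_{j\le k}\tilde N_j$ of slightly shrunk closed-based cylinder sets $\tilde N_j\subseteq N_j$, to a decreasing sequence of closed-based cylinder sets with $\mu(W_k)>\alpha/2$. Take $\delta<\alpha/2$ and the corresponding $R$: then each $W_k\cap K_R$ is nonempty, and these sets are decreasing and closed in the compact set $K_R$, so $\bigcap_k(W_k\cap K_R)\neq\emptyset$, contradicting $\bigcap_k W_k\subseteq\bigcap_k N_k=\emptyset$. Hence $\mu(N_k)\to0$, so $\mu$ is $\sigma$-additive on $\cC(V)$ and extends to $\cB(V)$.

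The main obstacle is the uniform tightness of the second and third paragraphs: the entire proof turns on upgrading the \emph{one-seminorm} continuity of $\mu$ to a tail bound on $\nu_{\cF}(\{\sum_i\chi(e_i)^2>R^2\})$ that does not depend on the finite-dimensional subspace $\cF$, and this is precisely where the Hilbert--Schmidt — equivalently, nuclear — nature of the embedding $V_q\to V_p$ is indispensable: it is the summability $\sum_i p(e_i)^2<\infty$ over a $q$-orthonormal system that lets the Gaussian averaging close with constants uniform in $n$. Without the nuclearity hypothesis this step, and the conclusion of the theorem, both fail.
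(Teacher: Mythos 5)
The paper does not actually prove Theorem \ref{minlosthm}; it quotes the result from the literature (Minlos for $\sigma$-Hilbert spaces, with the proof reproduced in Gelfand--Vilenkin, and Umemura for general nuclear spaces), so there is no in-paper argument to compare against. Your sketch reconstructs precisely the standard proof from those sources --- characteristic functional, the Minlos--Sazonov quadratic estimate, Gaussian averaging against a Hilbert--Schmidt link $V_q\to V_p$ to obtain tail bounds \emph{uniform over all finite-dimensional subspaces}, and then the Alaoglu-compactness argument for $\sigma$-additivity on the algebra of cylinder sets --- and its architecture is correct; in particular the Hahn--Banach step showing that $N\cap K_R=\emptyset$ forces $N$ to lie in a cylinder set of $\mu$-measure at most $\delta$ over its own generating subspace, and the replacement of the $N_k$ by closed-based decreasing cylinder sets $W_k$, are exactly the right moves. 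Two points are glossed over and would need to be made explicit in a full write-up. First, the estimate $\mathrm{Re}\,(1-\hat\mu(v))\le\varepsilon+C_\varepsilon\,p(v)^2$ must hold for \emph{all} $v\in V$, since the Gaussian vector $\sum_i g_ie_i$ has unbounded support; the two cosine inequalities you cite only yield $\mathrm{Re}\,(1-\hat\mu(v))\le 3\varepsilon$ on the ball $\{p(v)\le\delta\}$, and the globalization requires in addition the positive-definiteness inequality $1-\mathrm{Re}\,\hat\mu(nv)\le n^2\bigl(1-\mathrm{Re}\,\hat\mu(v)\bigr)$ (or an equivalent scaling device). Second, a $q$-orthonormal basis of $\cF$ need not exist when $\cF\cap\ker q\neq\{0\}$; here one observes that $p$ vanishes on $\ker q$, hence $\hat\mu\equiv 1$ there, hence $\nu_\cF$-almost every $\chi$ annihilates $\cF\cap\ker q$, and one runs the argument on $\cF/(\cF\cap\ker q)$. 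With these two repairs your sketch is a complete and correct proof, and it is in substance the proof the paper delegates to its references.
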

\begin{proof} 
If $V$ is a $\sigma$-Hilbert space, this result was proved by R.A. Minlos \cite{minlos}. A proof of this case is given in \cite[pp. 290--30]{gw4}. The general case is treated in \cite{umemura}. 
\end{proof}

Now we specialize to the case when $V$ is a Hilbert  space  with scalar product $\langle \cdot,\cdot\rangle$. Let $(b_n)_{n\in \N}$ be  a sequence of positive nuclear operators on $V$. Then the sets 
\begin{align}
U_{n,r} :=\{t \in V: \langle b_n t, t\rangle\leq r\}, \quad where~~ n\in \N, ~r >0,
\end{align} 
form a $0$-neighborhood basis of a locally convex topology on $V$, called the {\it Sasonov topology} associated with the sequence $(b_n)_{n\in \N}$. 

A cylinder measure $\mu$ on $V'$ is called  {\it continuous} with respect to the Sasonov  topology if for each $\varepsilon >0$ there exist $n\in \N$ and $\delta >0$ such that 
$\langle b_n t ,t\rangle \leq \delta$ implies  that $\mu(\{ s\in V: |\langle s,t\rangle  \geq 1\})\leq \varepsilon$. 

The following {\it Sazonov theorem}    characterizes $\sigma$-additive cylinder measures.
\begin{thm}\label{sazonov}
Suppose that $V$ is a Hilbert  space. 
A cylinder measure $\mu$ on $V'$ is $\sigma$-additive   if and only if there exists a sequence $(b_n)_{n\in \N}$ of positive nuclear operators on $V$ such that $\mu$ is continuous in the corresponding Sazanov topology.
\end{thm}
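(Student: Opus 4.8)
The plan is to identify $V'$ with $V$ via the Riesz isomorphism — as is already implicit in the statement, where $\langle s,t\rangle$ denotes the pairing — and to treat the two implications separately. I would dispose of ``$\mu$ $\sigma$-additive $\Rightarrow$ $\mu$ Sazonov continuous'' by a short Prokhorov tightness argument, and reduce the converse (the Minlos--Sazonov part) to Minlos' theorem (Theorem \ref{minlosthm}). Throughout I would assume $V$ separable, the non-separable case reducing to it since a $\sigma$-additive cylinder measure is carried by a separable closed subspace.

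For \emph{necessity}, suppose $\mu$ is $\sigma$-additive, hence extends to a Borel probability measure on $V$. By Prokhorov's theorem, for each $n$ there is a compact $K_n\subseteq V$ with $\mu(V\setminus K_n)<(2n)^{-1}$. I would then set
\begin{align*}
\langle b_nt,t\rangle:=\int_{K_n}\langle s,t\rangle^2\,d\mu(s),\qquad t\in V,
\end{align*}
which is a bounded positive quadratic form since $K_n$ is bounded, and the associated operator $b_n$ satisfies $\sum_i\langle b_ne_i,e_i\rangle=\int_{K_n}\|s\|^2\,d\mu(s)<\infty$ for any orthonormal basis $(e_i)$, so $b_n$ is of trace class, i.e. nuclear. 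Chebyshev's inequality gives $\mu(\{s\in K_n:|\langle s,t\rangle|\ge1\})\le\langle b_nt,t\rangle$, hence $\langle b_nt,t\rangle\le(2n)^{-1}$ forces $\mu(\{s:|\langle s,t\rangle|\ge1\})\le\mu(V\setminus K_n)+\langle b_nt,t\rangle\le n^{-1}$. Given $\varepsilon>0$, choosing $n\ge\varepsilon^{-1}$ and $\delta:=(2n)^{-1}$ is exactly the required Sazonov continuity with respect to $(b_n)_{n\in\N}$.

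For \emph{sufficiency}, assume $\mu$ is continuous in the Sazonov topology of some sequence $(b_n)$ of positive nuclear operators. First I would pass to a single operator: $a:=\sum_n2^{-n}(1+\mathrm{tr}\,b_n)^{-1}b_n+\sum_n2^{-n}\langle\cdot,f_n\rangle f_n$ for an orthonormal basis $(f_n)$, rescaled so $\|a\|\le1$; this is injective, positive and nuclear and satisfies $\langle b_nt,t\rangle\le C_n\langle at,t\rangle$ for all $n,t$ with constants $C_n>0$. Setting $c:=a^{1/2}$ (injective, positive, Hilbert--Schmidt), $V_k:=c^k(V)$ with norm $\|t\|_k:=\|c^{-k}t\|$ for $k\ge0$ (so $V_0=V$), I would let $W:=\bigcap_{k\ge0}V_k$ carry the topology of the norms $\|\cdot\|_k$. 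Then $W$ is dense in $V$ and is a \emph{nuclear} $\sigma$-Hilbert space, because the embedding $V_{k+2}\hookrightarrow V_k$ is unitarily equivalent to the nuclear operator $a$ on $V$. Every finite-dimensional subspace of $W$ is one of $V=V'$, so $\mu$ restricts to a cylinder measure $\mu_W$ on $W'$, and since $\langle b_nt,t\rangle\le C_n\langle at,t\rangle\le C_n\|t\|_0^2$, the Sazonov estimate makes $\mu_W$ continuous in the sense of Definition \ref{defconti}. Minlos' theorem (Theorem \ref{minlosthm}) then makes $\mu_W$ $\sigma$-additive, extending it to a Borel measure $\tilde\mu$ on $W'$. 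Finally I would transfer $\tilde\mu$ back: since $\langle b_nt,t\rangle\le\|b_n\|\,\|t\|^2$, $\mu$ is continuous already in the $V$-norm topology, and the Hilbert-scale step $V_1\hookrightarrow V_0=V$ is Hilbert--Schmidt; by the Gaussian/Fubini estimate underlying the proof of Minlos' theorem, $\tilde\mu$ is then carried by the $\tilde\mu$-measurable set $V=V'\subseteq W'$, and its restriction to the cylinder sets of $V$ is $\mu$ — the desired $\sigma$-additive extension.

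The main obstacle is this last concentration step, ``$\tilde\mu(W'\setminus V)=0$'': it is precisely here that one must use that the controlling seminorms are \emph{nuclear}, not merely continuous, via a Hilbert--Schmidt (Gaussian) estimate bounding $\mu$ of the complement of a large ellipsoid by $\int(1-\mathrm{Re}\,\hat\mu)\,d\gamma$ for a suitable $\sigma$-additive Gaussian $\gamma$. Equivalently one can bypass the scale entirely and run this Minlos-type estimate directly, after extracting from Sazonov continuity the inequality $1-\mathrm{Re}\,\hat\mu(t)\le\varepsilon+\langle S_\varepsilon t,t\rangle$ with $S_\varepsilon$ positive nuclear. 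Full details of this classical theorem are in \cite[Ch.\ IV]{gw4}.
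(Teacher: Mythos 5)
The paper offers no argument here at all -- Theorem \ref{sazonov} is proved by citation to \cite[Chapter IV, \S 2, Proposition 7]{gw4} and \cite{sazonov} -- so there is nothing to compare approaches with; your attempt must stand on its own. Your \emph{necessity} direction is correct and essentially self-contained (one can even avoid Prokhorov: by $\sigma$-additivity the balls $B_R$ satisfy $\mu(B_R)\to 1$, and bounded sets already make $b_n$ trace class, since $\sum_i\langle b_ne_i,e_i\rangle=\int_{K_n}\|s\|^2\,d\mu\leq R_n^2$). The reduction to a separable subspace in the non-separable case is also a standard point, though it deserves a sentence.

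The \emph{sufficiency} direction, however, has a genuine gap, and moreover the justification you offer for closing it is directionally wrong. Your detour through the nuclear scale $W=\bigcap_kV_k$ and Minlos only produces a measure $\tilde\mu$ on the \emph{larger} space $W'$; the entire content of the hard implication is then the concentration statement $\tilde\mu(W'\setminus V)=0$, which you justify by ``$\mu$ is continuous in the $V$-norm topology and $V_1\hookrightarrow V_0$ is Hilbert--Schmidt.'' That pair of hypotheses cannot suffice: the standard Gaussian cylinder measure with identity covariance on an infinite-dimensional $V$ satisfies both (it is norm-continuous, and the scale can be built from any injective positive Hilbert--Schmidt $c$), Minlos does extend it to a measure on $W'$, and yet that measure gives $V$ measure zero (indeed $\sum_i\langle s,e_i\rangle^2=+\infty$ almost surely), consistent with Proposition \ref{mstheorem}. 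So the step as stated would ``prove'' $\sigma$-additivity of a cylinder measure that is not $\sigma$-additive. What must actually be used is continuity of $\mu$ with respect to the \emph{weaker} seminorm $\|c\,\cdot\|=\langle a\,\cdot,\cdot\rangle^{1/2}$ with $c=a^{1/2}$ Hilbert--Schmidt on $V$ itself -- which your single-operator reduction does provide -- fed into the Gaussian/Fourier estimate $\mu(\{s:\langle Bs,s\rangle\geq R^2\})\leq C\int(1-\mathrm{Re}\,\hat\mu)\,d\gamma$; this is exactly the ``bypass'' you mention in your last sentence but do not carry out. As it stands, the core analytic lemma of the Minlos--Sazonov theorem is asserted rather than proved (and the scale/Minlos scaffolding is then redundant), so the sufficiency half remains open in your write-up.
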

\begin{proof} \cite[Chapter IV, \S 2, Proposition 7]{gw4} or \cite{sazonov}.
\end{proof}

\section{Moment functionals with cylinder measures}\label{momentfunccylsec}
Let us introduce our setup and some notations   that will be kept in this paper. Throughout, $A$ is a unital real  commutative algebra. We suppose that  $T$ is a linear subspace of $A$ such that $T\cup\{1\}$   generates the algebra $A$ and $S$ is a vector space basis of $T$. The vector space of all linear functionals on $T$ is denoted by $T^*$. 

 By a character of $A$ we mean  an algebra homomorphism $\chi:A\to \R$ such that $\chi(1)=1.$ The set of  characters of the commutative algebra $A$ is denoted by $\hat{A}.$ We equipp $\hat{A}$ with the weak topology inherited from $A$. A linear functional $L$ on $A$ is called positive if $L(f^2)\geq 0$ for all $f\in A.$
 
Obviously, each character of $A$ is uniquely determined by its values on $T.$ Hence the map $\hat{A}\ni \to \chi\lceil T \in T^*$ is injective. Simple  examples show that this map is not surjective in general, that is,  its range $\hat{A}\lceil T\equiv\{ \chi\lceil T;\chi\in \hat{A}\,\}$ can be different from $T^*$. For notational simplicity we will not distinguish between $\chi\in \hat{A}$ and its restriction $\chi\lceil T,$ that is, with a slight abuse of notation we consider $\hat{A}\lceil T$ as a subset of $\hat{A}$.  

Let $\cT$ denote the vector space $T$ equipped  with some locally convex Hausdorff topology. As usual,  $\cT'$ is the  dual vector space of all continuous linear functionals on $\cT$.  In what follows we will deal with cylinder measures on $\cT'$. 
(For instance, if we take the finest locally convex topology on the vector space $\cT$,  then $\cT'=T^*.$)

\begin{dfn}\label{defm}
Suppose that $\cT'\subseteq \hat{A}\lceil T.$ Let $\mu$ be a cylinder measure on $\cT'$ such that the function $\chi\mapsto f(\chi)$ on $\cT'$ is $\mu$-integrable for all $f\in A$. The linear functional 
$L$ on $A$ defined by 
\begin{align}\label{defmfcy}
L(f)=\int_{{\cT'}} f(\chi) \, d\nu(\chi)\quad {\rm  for}~~ f\in A,
\end{align}
is called a \emph{moment functional} on $A$ and $\mu$  a \emph{representing cylinder measure} of $L.$
\end{dfn}
  Since $\cT'\subseteq\hat{A}\lceil T$, equation (\ref{defmfcy}) means that $L$ is an integral over characters of $A$ with respect to the cylinder measure $\mu$. 
  
 The following example shows how the classical moment problem fits into the above setup.

\begin{exa}
Let $A=\R[x_1,\dots,x_d]$ and $T:=\{a_1x_1+\dots+a_dx_d: a_1,\dots,a_d\in \R\}.$  Since $\hat{A}$ consists of  point evaluations at points of $\R^d$, we have $\hat{A}\lceil T=T^*=\cT'\cong \R^d$ for any locally convex Hausdorff topology on $T.$ In this case, the moment functionals according to  Definition \ref{defm} are just the  "usual" moment functionals on $\R[x_1,\dots,x_d].$ 
\end{exa} 
  All integral representations of functionals $L$ developed below remain valid without the assumption $\cT'\subseteq \hat{A}\lceil T$. The latter is only used to  conclude that $L$ is an integral over characters of $A$, so  that $L$  is  a moment functional. 
  
  In the rest of this section  we  discuss the condition $\cT'\subseteq \hat{A}\lceil T$ in Definition \ref{defm}.

If the character set $\hat{A}$ separates the points of $T$ (that is,  $t\in T$ and  $\chi(t)=0$ for all $\chi\in \hat{A}$ imply  $t=0$), this assumption is fulfilled if we equip $T$ with  the weak topology $\sigma(T,\hat{A}\lceil T)$ of the dual pairing $(T,\hat{A}\lceil T)$. This topology is Hausdorff by the separation assumption and we have $\cT'=\hat{A}\lceil T$ by a standard result on locally convex spaces.

Now let us consider the case when $\hat{A}$ does not separate the points of $A.$ Clearly,  
\begin{align}\label{radch}
{\rm Rad}_{\rm ch}(A):=\{ f\in A: \chi(f)=0 ~~ {\rm for~ all}~~\chi\in \hat{A}\, \}
\end{align}
is an ideal of $A$ and the quotient algebra $A/ {\rm Rad}_{\rm ch}(A)$ 
has the property that the characters separate the points. 

If $L$ is a positive functional on $A$ and $L$ vanishes on   $f\in {\rm Rad}_{\rm ch}(A),$  then $L$ induces a positive functional on  the quotient algebra $A/ {\rm Rad}_{\rm ch}(A)$ and we can study the moment problem there. Since  this quotient algebra characters separate points, we can proceed as in the preceding remark.

The following   question came up some years ago  in a discussion with Y. Savchuk:\smallskip

{\it  (*)~ Do all positive functionals on $A$ vanish on  ${\rm Rad}_{\rm ch}(A)$?}  
\smallskip

For finitely generated algebras this follows easily from Haviland's theorem.
As noted by T. Bisgaard \cite{bisgaard}, it is true if $A$ is the semigroup algebra $\R[S]$ of a commutative unital semigroup $S$. 
For general algebras the answer is negative as the follwoing example shows.

\begin{exa}\label{arens}
Let $A$ be the  Arens algebra  $L_\R^\omega(0,1):=\cap_{p\geq 1} L_\R^p(0,1),$ where $L_\R^p(0,1)$ is the real $L^p$-space with respect to the  Lebesgue measure. Then $A$ is a commutative unital real  algebra which has no character (see \cite{Arens} or \cite[Example 2.5.10]{Sch1990}), so that ${\rm Rad}_{\rm ch}(A)=A$. Let  $\varphi\in C_0(0,1), \varphi\neq 0.$ Then 
\begin{align*}
L_\varphi(f)=\int_0^1 f(x)|\varphi(x)|^2 dx, \quad f\in A,
\end{align*}
is a nonzero positive functional $L_\varphi$ on $A$. Obviously, it does not vanish on ${\rm Rad}_{\rm ch}(A).$
\end{exa}
\begin{rem}
Let $C$ be  a (real or complex)   unital $*$-algebra. The  $*$-radical ${\rm Rad}(C)$ is usually defined as  the ideal  of  $c \in C$ such that $L(c^*c)=0$ for all positive linear functionals $L$ on $C$, or equivalently, of elements of $C$ which are annihilated by all (possibly unbounded) $*$-representations of $C$. 

The commutative real algebra $A$ is a $*$-algebra with identity map as involution. For such an algebra $A$ the ideal
${\rm Rad}_{\rm ch}(A)$ defined by (\ref{radch}) can be considered as  another version of the radical. Then $(*)$ is the question when  ${\rm Rad}_{\rm ch}(A)$ coincides with ${\rm Rad}(A)$.For the algebra $A$ in Example \ref{arens} we have ${\rm Rad}_{\rm ch}(A)=A$ and\, ${\rm Rad}(A)=\{0\}$.
\end{rem}
\section{Integral Representation of Linear Functionals}\label{haviland}
 
 Now we begin with some preparations for our main result (Theorem \ref{mpscylinder}).
 
  Let $Q$ denote the unital monoid generated by the elements $1+t^2$, where $t\in S$, and let $B:=Q^{-1}A$ be 
the corresponding algebra of fractions. 
\begin{lem}\label{extensionchar}
Each character $\chi\in \hat{A}$ extends uniquely to a character, denoted again by $\chi$ with a slight abuse of notation, of $B=Q^{-1}A$ by setting $\chi(q^{-1})=\chi(q)^{-1}, q\in Q$.
The map $\hat{B}\ni \chi\mapsto \chi\lceil T\in T^*$ is injective. 
\end{lem}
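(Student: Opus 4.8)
The plan is to verify the two assertions separately, both by direct computation with the localization $B = Q^{-1}A$. For the extension statement, I would first observe that every element $q \in Q$, being a finite product of elements of the form $1+t^2$ with $t \in S \subseteq T \subseteq A$, is mapped by any character $\chi \in \hat A$ to a product of numbers $1 + \chi(t)^2 \geq 1 > 0$; in particular $\chi(q) \neq 0$ for all $q \in Q$. This is precisely the condition that allows the universal property of the localization to be applied: since $\chi \colon A \to \R$ sends every element of the multiplicative set $Q$ to a unit of $\R$, it factors uniquely through the localization map $A \to Q^{-1}A = B$, yielding a unique algebra homomorphism $\tilde\chi \colon B \to \R$ with $\tilde\chi(q^{-1}a) = \chi(q)^{-1}\chi(a)$, and in particular $\tilde\chi(q^{-1}) = \chi(q)^{-1}$ and $\tilde\chi(1) = 1$, so $\tilde\chi \in \hat B$. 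Uniqueness of the extension is immediate because $B$ is generated as an algebra by $A$ together with the inverses $q^{-1}$, $q \in Q$, and the value of any character on $q^{-1}$ is forced to be $\chi(q)^{-1}$ once its values on $A$ are fixed. (Alternatively one avoids quoting the universal property and simply checks directly that $\tilde\chi(q^{-1}a) := \chi(q)^{-1}\chi(a)$ is well defined on equivalence classes and multiplicative.)

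For the injectivity of $\hat B \ni \chi \mapsto \chi\lceil T \in T^*$, I would argue as follows. Suppose $\chi_1, \chi_2 \in \hat B$ agree on $T$. Since $T \cup \{1\}$ generates $A$ and $\chi_1, \chi_2$ are algebra homomorphisms fixing $1$, they agree on all of $A$. Then for any $q \in Q$ we have $\chi_1(q) = \chi_2(q)$, and since $\chi_i(q) \chi_i(q^{-1}) = \chi_i(1) = 1$ with $\chi_i(q) \neq 0$, it follows that $\chi_1(q^{-1}) = \chi_2(q^{-1})$. As $B$ is generated by $A$ and the elements $q^{-1}$, $q \in Q$, we conclude $\chi_1 = \chi_2$ on $B$. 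Equivalently, injectivity for $\hat B$ is inherited from the (already noted) injectivity of $\hat A \ni \chi \mapsto \chi\lceil T$ together with the fact, just proved, that the restriction map $\hat B \to \hat A$, $\chi \mapsto \chi\lceil A$, is a bijection.

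I do not anticipate a serious obstacle here: the statement is essentially the standard fact that localizing a commutative ring at a multiplicative set $Q$ produces a spectrum equal to the subset of primes disjoint from $Q$, specialized to $\R$-valued characters, and the only input specific to this paper is the elementary observation that $1 + \chi(t)^2 \geq 1$ guarantees $Q$ lands in $\R \setminus \{0\}$. The one point that warrants a line of care is the well-definedness of $\tilde\chi$ on equivalence classes $q^{-1}a = (q')^{-1}a'$ in $B$, i.e. that $\chi(q)^{-1}\chi(a) = \chi(q')^{-1}\chi(a')$ whenever $r(q'a - qa') = 0$ for some $r \in Q$; this follows by applying $\chi$ and dividing by $\chi(r)\chi(q)\chi(q') \neq 0$. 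Everything else is bookkeeping with generators.
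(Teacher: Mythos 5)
Your proposal is correct and follows essentially the same route as the paper: observe that $\chi(q)=\prod_i(1+\chi(t_i)^2)>0$ so that $\chi(q)^{-1}\chi(a)$ is a well-defined character on $B=Q^{-1}A$ (the paper checks well-definedness on representatives rather than quoting the universal property of localization, but this is the same computation), and deduce injectivity of $\hat B\ni\chi\mapsto\chi\lceil T$ from the fact that $T\cup\{1\}$ generates $A$ and the values on $Q^{-1}$ are forced. No gaps; the extra care you take with equivalence classes is exactly the step the paper labels "one easily checks."
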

\begin{proof}
Let $\chi\in \hat{A}$. Let $q\in Q, q\neq 1.$ Then $q$ is of the form $q=(1+t_1)\dots(1+t_k^2)$ with $t_1,\dots,t_k\in T$ and 
\begin{align}\label{cht}
\chi(q)=(1+\chi(t_1)^2)\cdots(1+\chi(t_k)^2)>0.
\end{align}
Any element $b$ of $Q^{-1}A$ is of the form $b=q^{-1}a$ with $q_1\in Q$ and $a\in A$. Note that $\chi(q)>0$ by (\ref{cht}).
 We define $ \chi(b)=\chi(q)^{-1}\chi(a)$. One easily checks that this definition is independent of the particular representation $b=q^{-1}a$ of the element $b\in B$ and that $\chi$ becomes a character on the algebra $B=Q^{-1}A$. This completes the proof of the first assertion.
 
  Since $T$ generates the algebra $A$, each character of $A$, by the preceding also  each character of $B$, is uniquely determined by its values on $T$. This implies the injectivity of the map  $ \chi\mapsto \chi\lceil T$. 
 \end{proof}
Thus, the character of $A$ are precisely the restriction of characters of $B$. As usual we will consider elements of $B$ as functions on the character set $\hat{B}$ and write $b(\chi):=\chi(b)$ for $\chi\in \hat{B}$ and $b\in B$.

The crucial technical result for our approach  is the following lemma.
Similar   algebras of fractions have been used in \cite{Sch2010} and also in \cite{gik}.
\begin{lem}\label{finitecase}
Let $\cF$ be a finite-dimensional linear subspace of $T$ and let $\{t_1,\dots,t_k\}$ be a vector space basis   of $\cF$. Let  $B_F$ denote the unital subalgebra of $B$ generated by\, $t_i$ and $a_i:=(1+t_i^2)^{-1}$ for $i=1,\dots,k$. For  each positive linear functional $L$ on $B_F$ there exists a unique Radon measure $\nu_F$ on $\cF^*$ 
such that
\begin{align}\label{repposbf}
L(b)=\int_{{\cF^*}}b(\chi) \, d\nu_\cF(\chi)\quad {\rm  for}~~ b\in B_F.
\end{align}
\end{lem}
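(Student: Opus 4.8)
The plan is to reduce the statement to the classical multidimensional moment problem on $\R^k$ via Haviland's theorem, using the invertibility of the elements $1+t_i^2$ in $B_F$ to force a representing measure without any growth condition. First I would identify the character space $\widehat{B_F}$. Since $B_F$ is generated by $t_1,\dots,t_k$ and $a_i=(1+t_i^2)^{-1}$, and the $t_i$ form a basis of $\cF$, a character $\chi$ of $B_F$ is determined by the tuple $(\chi(t_1),\dots,\chi(t_k))\in\R^k$, and conversely every point of $\R^k$ gives a character (because $1+t_i^2$ is mapped to the strictly positive number $1+\chi(t_i)^2$, so its inverse is automatically consistent). Thus $\widehat{B_F}\cong\R^k\cong\cF^*$, and under this identification an element $b\in B_F$ becomes a function on $\R^k$ that is a polynomial in $x_1,\dots,x_k$ and $(1+x_i^2)^{-1}$, hence a bounded-denominator rational function, in particular continuous on $\R^k$.

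Next I would verify the hypothesis of Haviland's theorem for the finitely generated algebra $B_F$: namely that $L$ is $(B_F)_+$-positive, i.e.\ $L(b)\ge 0$ whenever $b(\chi)\ge 0$ for all $\chi\in\widehat{B_F}$. Here is where the construction of $B_F$ pays off. Suppose $b\ge 0$ on $\widehat{B_F}\cong\R^k$. Clearing denominators, write $b=q^{-1}p$ with $p\in\R[t_1,\dots,t_k]\subseteq A$ and $q=\prod_i(1+t_i^2)^{m_i}\in Q$; then $p=qb$ is a polynomial that is nonnegative on all of $\R^k$. By the standard trick (multiplying $p$ by a further even power of the $1+t_i^2$ and using that a polynomial nonnegative on $\R^k$ becomes, after multiplication by a suitable such product, a sum of squares of rational functions with denominators in $Q$ — or more elementarily, using that $p\cdot\prod_i(1+t_i^2)^{N}$ for large $N$ can be handled by the one-variable fact that $(1+x^2)^N$ dominates any polynomial, applied to reduce to sums of squares in $B_F$), one shows $b$ lies in the quadratic module generated by squares of elements of $B_F$; since $L$ is positive on $B_F$ this gives $L(b)\ge0$. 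Granting $(B_F)_+$-positivity, Haviland's theorem \cite[Theorem 1.14]{schm2017} yields a Radon measure $\nu_\cF$ on $\widehat{B_F}\cong\cF^*$ with $L(b)=\int b(\chi)\,d\nu_\cF(\chi)$ for all $b\in B_F$. Uniqueness follows because $B_F$ contains, for each $i$, the bounded continuous functions $t_i(1+t_i^2)^{-1}=t_i a_i$ and $a_i$, and more generally the algebra $B_F\lceil\widehat{B_F}$ is dense enough (it separates points and contains $a_1\cdots a_k$, a strictly positive function vanishing at infinity) to be determining for Radon measures on $\R^k$; alternatively one notes $\nu_\cF$ is finite (take $b=1$) and $B_F$ restricted to $\R^k$ is dense in $C_0(\R^k)+\R\cdot 1$ in the appropriate sense.

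The main obstacle I anticipate is the $(B_F)_+$-positivity step: translating "$b\ge0$ on $\widehat{B_F}$'' into an algebraic certificate inside $B_F$ that $L$ can be seen to respect. Because $\widehat{B_F}$ is all of $\R^k$ (not a proper semialgebraic subset) and because the denominators are fixed products of $1+t_i^2$, this should go through by an elementary argument: given $p\in\R[t_1,\dots,t_k]$ with $p\ge0$ on $\R^k$, for $N$ large the function $p\cdot\prod_i(1+t_i^2)^{-N}$ is bounded, and one can approximate/dominate to place it in the closure of squares; the cleanest route is probably to invoke that $B_F$ is (isomorphic to) a localization in which the positivstellensatz for $\R^k$ with denominators is available, or simply to cite the finitely generated Haviland theorem after checking that $b\ge 0$ pointwise together with positivity of $L$ on squares already suffices — which is exactly the content of \cite[Theorem 1.14]{schm2017} once one observes that the relevant cone of pointwise-nonnegative elements of $B_F$ is, by the localization, generated modulo squares by the single everywhere-positive element $\prod_i(1+t_i^2)^{-1}$, whose reciprocal lies in $A\subseteq B_F$. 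The remaining steps — identification of $\widehat{B_F}$ with $\cF^*$, continuity of the functions $b$, finiteness and uniqueness of $\nu_\cF$ — are routine.
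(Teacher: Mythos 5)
Your overall architecture (identify $\widehat{B_F}$ with $\R^k\cong\cF^*$, get an integral representation for the finitely generated algebra $B_F$, prove uniqueness via the bounded elements $a_i$ and $t_ia_i$ and Stone--Weierstrass) matches the paper, and your uniqueness sketch is essentially the paper's argument. But the existence half has a genuine gap exactly where the real work lies: the passage from positivity of $L$ on squares to $(B_F)_+$-positivity. You invoke a ``standard trick'' to the effect that a polynomial $p\geq 0$ on $\R^k$ becomes a sum of squares after multiplication by a suitable product $\prod_i(1+x_i^2)^{N_i}$. This is not a standard fact; for $k\geq 2$ nonnegative polynomials need not be sums of squares, Artin's solution of Hilbert's 17th problem gives no control on the denominators, and Reznick-type uniform-denominator theorems apply to positive definite forms, not to merely nonnegative polynomials. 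The parenthetical ``more elementary'' variant (that $(1+x^2)^N$ dominates any polynomial) proves nothing about sum-of-squares membership. What is actually available in such localizations (cf.\ \cite{Sch2010}) are \emph{strict} Positivstellens\"atze, and even granting one you would still need the $\varepsilon$-perturbation step $L(b+\varepsilon)\geq 0$ for all $\varepsilon>0$, which you do not carry out. Your fallback reading of Haviland --- that pointwise nonnegativity plus positivity of $L$ on squares ``already suffices'' --- misstates that theorem: its hypothesis is precisely $(B_F)_+$-positivity, which is what you are trying to establish.

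The paper sidesteps the need for any algebraic certificate. It observes from the identity $(a_i-1/2)^2+(t_ia_i)^2=1/4$ that the $2k$ elements $a_i$ and $t_ia_i$ are bounded on $\widehat{B_F}$, and then applies the fibre theorem of \cite{schm16} to these bounded elements and the preordering $\sum(B_F)^2$: each fibre algebra $B_F/\cI_\lambda$ collapses to $\R$ (since $t_i=\lambda_{k+i}\lambda_i^{-1}$ there), hence trivially has property (MP), so $\sum(B_F)^2$ has (MP) and every positive functional on $B_F$ is a moment functional. This is a statement about functionals, not about representing nonnegative elements as sums of squares, and it is the mechanism that makes the localization at the $(1+t_i^2)$ pay off. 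To repair your proof you would either have to import the fibre theorem (or an equivalent operator-theoretic argument via essential self-adjointness of the GNS operators $\pi(t_i)$, using the bounded inverses $\pi(a_i)$), or supply a genuine Nichtnegativstellensatz with denominators in $Q$ --- which you have not done and which is not elementary.
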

Before we prove this lemma let us rewrite the integral (\ref{repposbf}) in terms of coordinates on the vector space $\cF^*$. This  builds the bridge to the  moment problem on $\R^k$. Define functionals $\chi_i\in \cF^*$ by $\chi_i(t_j)=\delta_{ij}$, where $i,j=1,\dots,k$. Clearly, $\{\chi_1,\dots,\chi_k\}$ is a vector space basis of  $\cF^*$. It is  the dual basis to the basis $\{t_1,\dots,t_k\}$ of $\cF$. Hence each $\chi\in \cF^*$ can be written as 
\begin{align*}
\chi=x_1\chi_1+\dots+x_k\chi_k \quad {\rm with}~~~ x(\chi):=(x_1,\dots,x_k)\in \R^k
\end{align*} uniquely determined by $\chi$. We shall write   $\tilde{b}(x(\chi)):=b(\chi)$ for $\chi\in \cF^*$ and $b\in B_F$. Then $\tilde{b}$ is a function on $\R^k$. Let $\tilde{\nu}_\cF$ denote the  Radon measure  on $\R^k$ defined by $\tilde{\nu}_\cF(x(\chi))):=\nu_\cF(\chi)$. Then we have 
\begin{align}\label{coordintegral}
\int_{\cF^*}\, b(\chi)\, d\nu_\cF(\chi)=\int_{\R^k}\, \tilde{b}(x)\, d\tilde{\nu}_\cF(x) \quad {\rm for}~~b\in B_F.
\end{align}

{\it Proof of Lemma \ref{finitecase}:}

The existence assertion will be derived from the general fibre theorem proved in \cite{schm16}. 
For $i=1,\dots,k$, we set $b_i:=t_i(1+t_i^2)^{-1}$. Clearly, $b_i\in B_F$. Let $\chi\in \widehat{B_F}$. From the identity  $(a_i-1/2)^2+b_i^2=1/4$  we conclude that 
\begin{align*}
(\chi(a_i)-1/2)^2+\chi(b_i)^2=1/4.
\end{align*}
Therefore, $|\chi(a_i-1/2)|\leq 1/2$, hence $|\chi(a_i)|\leq 1$, and $|\chi(b_i)|\leq 1/2$. This shows that the $2k$ functions $h_i=a_i$ and $h_{k+i}:=b_i$, where $i=1,\dots,k$,  on $\widehat{B_F}$ are bounded.   We apply the fibre theorem to the bounded functions $h_1,\dots,h_{2k}$ and the preordering $\cP:=\sum (B_F)^2$.  Note that $\cK(\cP)=\widehat{B_F}$. 

Let us fix numbers $\lambda_i \in h_i(\cK(\cP)), i=1,\dots,2k$, and set $\lambda=(\lambda_1,\dots,\lambda_{2k})\in \R^{2k}$. For $\chi\in \widehat{B_F}$ and $i=1,\dots,k$, we  obtain 
$$(1+\chi(t_i)^2)\chi(h_i)=\chi(1+t_i^2)\chi(a_i)=\chi(1+t_i^2)a_i)=\chi(1)=1,$$ so that $\chi(h_i)>0$ and therefore $\lambda_i>0$. Let $\cI_\lambda$ denote the ideal of $B_F$ generated by the functions $h_i-\lambda_i$, $i=1,\dots,2k.$ In the quotient algebra $B_F/ \cI_\lambda$ we have  $h_i=(1+t_i^2)^{-1}=\lambda_i$ and $h_{k+i}=t_i(1+t_i^2)^{-1}=\lambda_{k+i},$
 so that $t_i=\lambda_{k+i}\lambda_i^{-1}$ for $i=1,\dots,k$. Hence the algebra   $B_F/ \cI_\lambda$ is $\R$, so it  obviously obeys property (MP). Thus, all assumptions of the fibre theorem  are satisfied.  By the fibre theorem \cite{schm16} (or equivalently, by the implication (iii)$\to$(i) of  \cite[Theorem 13.10]{schm2017}), the preordering $\sum (B_F)^2$ of the algebra $B_F$ has property (MP). This means that the positive linear functional $L$ on $B_F$ is an integral of some Radon measure $\nu$ on $\widehat{B_F}$. From Lemma \ref{extensionchar}, applied to $B_F$ instead of $B$, it follows that $\widehat{B_F}\ni \chi \to \chi\lceil \cF\in \cF^*$ is injective. If $\nu_\cF$ denotes the pushforward of the measure $\nu$ under this mapping, we obtain the desired integral representation (\ref{repposbf}).

Finally, we prove the uniqueness assertion. 
 Let $\mu_F$ and $\nu_F$ be two Radon measures on $\cF^*$ for which (\ref{repposbf}) is satisfied.  Set
 $h_i:=(1+t_i^2)^{-1}$ and $h_{k+i}:=t_i(1+t_i^2)^{-1},$   $i=1,\dots,k$. Let $C_F$ denote the subalgebra of $B_F$ generated all elements $h_{i_1}\cdots h_{i_k}$, where $i_j\in \{j,k+j\},$ and let $D_F$ be the algebra of functions $\tilde{h}$, where $h\in C_F$. Clearly,
 $\widetilde{h_i}(x)=(1+x_i^2)^{-1}$ and $\widetilde{h_{k+i}}(x)=x_i(1+x_i^2)^{-1}$ for $x=(x_1,\dots,x_k)\in \R^k$ and $i=1,\dots,k$. Further, if $h:=h_{i_1}\cdots h_{i_k}$, then $\tilde{h}=\widetilde{h_{i_1}}\dots \widetilde{h_{i_k}}$. This implies that all functions of $D_F$  vanish at infinity. Clearly, the algebra $D_F$ separates the points of $\R^k$.  For $g:=h_1\cdots h_k$ we obviously have $\tilde{g}\neq 0$ on $\R^k$. Clearly,   $D_F$ is an algebra of continuous functions on the locally compact space $\R^k$. As shown by the preceding, this algebra satisfies the assumptions of the Stone-Weierstrass theorem  (see e.g. \cite[Chapter V, Corollary 8.6]{conway}). By this theorem, for each function $f\in C_0(\R^k)$ there exists a sequence $(f_n)_{n\in \N}$ of elements from $C_F$ such that the sequence $(\widetilde{f_n})_{n\in \N}$ of $D_F$ converges to $f$ uniformly on $\R^k$. By (\ref{repposbf}) and  (\ref{coordintegral}), we have
 \begin{align}\label{mutildemuf}
\int_{\R^k}\, \widetilde{f_n}(x)\, d\tilde{\mu}_F(x)= \int_{\cF^*}\, f_n(\chi)\, d\nu_\cF(\chi)= \int_{\cF^*}\, f_n(\chi)\,d\nu_F=\int_{R^k}\, \widetilde{f_n}(x)\, d\tilde{\nu}_F(x).
 \end{align}
 Passing to the limit in (\ref{mutildemuf}) we get $\int f(x)\, d\tilde{\mu}_F=\int f(x)\, d\tilde{\nu}_F$. Since this holds for all $f\in C_0(\R^k)$,  we conclude  that $\tilde{\mu}_F=\tilde{\nu}_F$. Hence $\mu_F=\nu_F.$
$ \hfill \Box$
\medskip

\begin{thm}\label{mpalgebraB}
Let $L$ be a positive linear functional on $B$ such that $L(1)=1$. 
Then there exists a unique cylinder measure $\mu$ on $\cT'$ such that 
\begin{align}\label{replcylincer}
\tilde{L}(b)=\int_{{\cT'}} b(\chi) \, d\mu(\chi)\quad {\rm  for}~~ b\in B.
\end{align}
\end{thm}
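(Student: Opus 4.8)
The plan is to build the cylinder measure $\mu$ on $\cT'$ from the family of Radon measures provided by Lemma \ref{finitecase}, via the compatibility/consistency mechanism recalled in Section \ref{cylindersec} (equations (\ref{nufmun}) and (\ref{compcylicner})). First I would observe that for each finite-dimensional subspace $\cF$ of $\cT$ with basis $\{t_1,\dots,t_k\}$, the restriction $L\lceil B_F$ is a positive linear functional on the subalgebra $B_F\subseteq B$, so Lemma \ref{finitecase} yields a unique Radon measure $\nu_\cF$ on $\cF^*$ with $L(b)=\int_{\cF^*} b(\chi)\,d\nu_\cF(\chi)$ for $b\in B_F$; normalization $\nu_\cF(\cF^*)=L(1)=1$ holds automatically. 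The main point is then to identify $\cF^*$ with the quotient $\cT'/\cF^\circ$ (the natural pairing between $\cT'$ and $T$ restricts to a pairing identifying $\cT'/\cF^\circ$ with a subspace of $\cF^*$, and in fact with all of $\cF^*$ since $\cF$ is finite-dimensional and $\cT'$ separates points of $\cT$), so that $\nu_\cF$ can be regarded as a normalized Borel measure on $\cT'/\cF^\circ$.

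Next I would verify the consistency condition (\ref{compcylicner}): if $\cF\subseteq\cG$ are finite-dimensional subspaces of $\cT$, then $B_F\subseteq B_G$, so $L\lceil B_F$ is obtained by restricting $L\lceil B_G$, and the pushforward of $\nu_\cG$ under the canonical projection $\pi:\cG^*\to\cF^*$ is a Radon measure on $\cF^*$ that represents $L\lceil B_F$; by the uniqueness part of Lemma \ref{finitecase} this pushforward must equal $\nu_\cF$, which is exactly (\ref{compcylicner}). Therefore the family $(\nu_\cF)_\cF$, indexed by the finite-dimensional subspaces of $\cT$, satisfies the compatibility condition and hence, by the standard result cited after (\ref{compcylicner}) (\cite[Chapter IV, \S 1, 5.]{gw4}), arises from a unique cylinder measure $\mu$ on $\cT'$ via $\nu_\cF(M)=\mu(N)$ for the cylinder set $N$ with base $M$ and generating subspace $\cT'/\cF^\circ$.

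It then remains to check that this $\mu$ represents $L$ in the sense of (\ref{replcylincer}). For a fixed $b\in B$, the element $b$ lies in some $B_F$ (take $\cF$ spanned by the finitely many generators $t\in S$ occurring in a representation of $b$), and $b$, viewed as a function on $\hat B$ restricted to those characters determined by their values on $\cF$, is a function on $\cF^*\cong\cT'/\cF^\circ$, i.e. a cylinder function with generating subspace $\cT'/\cF^\circ$. By the construction of $\mu$ and the definition of the integral of a cylinder function against a cylinder measure, $\int_{\cT'}b(\chi)\,d\mu(\chi)=\int_{\cF^*}b(\chi)\,d\nu_\cF(\chi)=L(b)$, using Lemma \ref{finitecase}. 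This gives (\ref{replcylincer}); one should also note $\mu$-integrability of $b$ follows since $b$ is $\nu_\cF$-integrable by Lemma \ref{finitecase}. Finally, uniqueness of $\mu$: any representing cylinder measure $\mu'$ induces Radon measures $\nu'_\cF$ on each $\cT'/\cF^\circ\cong\cF^*$ which represent $L\lceil B_F$, so $\nu'_\cF=\nu_\cF$ by uniqueness in Lemma \ref{finitecase}, and a cylinder measure is determined by the family of measures it induces on the quotient spaces.

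The step I expect to be the main obstacle is the careful bookkeeping in identifying $\cF^*$ with $\cT'/\cF^\circ$ and matching up "function on $\hat B$" with "function on $\cF^*$": one must be sure that every $b\in B_F$, as a function on characters, really does factor through the finite-dimensional quotient $\cT'/\cF^\circ$ and coincides there with the function $\tilde b$ on $\R^k$ used in Lemma \ref{finitecase} (this uses Lemma \ref{extensionchar} and the injectivity of $\widehat{B_F}\to\cF^*$), and that the measure-theoretic notion of "integral of a cylinder function" in Definition \ref{cmd}'s setting genuinely reduces to integration of $\tilde b$ against $\tilde\nu_\cF$ on $\R^k$ as in (\ref{coordintegral}). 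The rest is a routine application of the projective-limit construction for cylinder measures.
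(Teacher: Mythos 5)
Your proposal is correct and follows essentially the same route as the paper: apply Lemma \ref{finitecase} to each finite-dimensional subspace $\cF$, use the uniqueness part of that lemma to verify the compatibility condition (\ref{compcylicner}) (the paper does this check explicitly in coordinates, showing the marginal of $\tilde\nu_\cG$ equals $\tilde\nu_\cF$, which is just your pushforward argument written out), assemble the family into a cylinder measure via the cited projective-limit result, and deduce both the representation and the uniqueness of $\mu$ from the uniqueness of the $\nu_\cF$. No gaps.
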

\begin{proof}
Let $\cF$ be a finite-dimensional linear subspace of $\cT$. By Lemma  \ref{finitecase} there exists a unique Radon measure $\nu_\cF$ on $\cF^*\cong \cT'/\cF^\circ$ such that (\ref{repposbf}) holds. Since $1\in B_F$ and $L(1)=1$ by assumption, each measure $\nu_\cF$ is normalized. If $N$ is a  cylinder set with base $M$ and generating space $\cT'/ \cF^\circ$, we define $\mu(N):= \nu_\cF(M)$.  As noted above, to show that $\mu$ is a well-defined cylinder measure on $\cT'$ it suffices to verify the compatibility condition  (\ref{compcylicner}).

Let  $\cG$ be another finite-dimensional linear subspace of $\cT$ such that $\cF\subset \cG$. We  choose a basis $G:=\{f_1,\dots,f_k,f_{k+1},\dots,f_n\}$, $k<n$, of the vector space $\cG$ such that $F:=\{f_1,\dots,f_k\}$ is a basis of $\cF$. 
Let $\nu_\cG$ be the corresponding measure for $\cG$ according to Lemma \ref{finitecase}.  Further, we denote by  $\tau $  the Radon measure on $\R^k$ given by $\sigma(M)=\tilde{\nu}_\cG(M\times \R^{n-k})$, where $M$ is a Borel set of $\R^k.$
We use the notation and some arguments from the proof of Lemma \ref{finitecase}. Let $b\in B_F$. Since $F\subseteq G$, we  have $b\in B_G$. Using (\ref{repposbf}) and  (\ref{coordintegral}) for both $\nu_\cF$ and  $\nu_\cG$ we derive 
\begin{align}\label{equ1}
\int_{\R^k}\, \tilde{b}(x)\, d\tilde{\nu}_F(x) &= \int_{\cF^*}\, b(\chi)\, d\nu
(\chi)=L(b)= \int_{\cG'}\, b(\chi)\,d\nu_F(\chi)\\&=\int_{\R^n}\, \tilde{b}(y)\, d\tilde{\nu}_G(y)=\int_{R^k}\, \tilde{b}(x)\, d\sigma(x).\label{equ2}
 \end{align}
Here the last equality follows from the fact that for $b\in B_F$ the function $\tilde{b}$ on $\R^n$ depends only on the first $k$ coordinates. By Lemma \ref{finitecase},  $\mu_F$ is th unique Radon  measure on $\cF^*$ satisfying(\ref{repposbf}). Hence $\tilde{\nu_\cF}$ is the unique Radon measure on $\R^k$ such that 
$\int \tilde{b}\, d\tilde{\nu}_\cF =L(b)$ for all $b\in B_F$. Therefore, it follows from  (\ref{equ1})--(\ref{equ2}) that $\tilde{\nu}_\cF=\sigma.$ Hence $\tilde{\nu}_\cF(M)=\sigma(M)=\tilde{\nu}_\cG(M\times \R^{n-k})$ for all Borel sets $M$ of $\R^k$. It is easily verified that the latter is just condition  reformulated in terms of coordinates. This completes the proof of the fact that $\mu$ is a well-defined cylinder measure. 

Let $b\in B.$ Then $b$ is contained in some algebra $B_F$, so that (\ref{repposbf}) holds.  Since $\mu(N):= \nu_\cF(M)$, this implies that (\ref{replcylincer}) holds.
The uniqueness of $\mu$ follows at once from the uniqueness of the measures $\nu_\cF$ stated in Lemma  \ref{finitecase}.
\end{proof}
Now we turn to the moment problem on the algebra $A$. Define
 \begin{align*}
 A_+=\{ f\in A: f(\chi)\geq 0\quad {\rm for}~~~\chi\in \hat{A}\,\}.
 \end{align*}
 Clearly, each moment functional $L$ is $A_+$-positive (by (\ref{defmfcy})) and satisfies $L(1)=1$ (since $\mu(\cT')=1)$ by Definition \ref{cmd}(ii)).

 Further, if $L$ is a positive functional on $A$, we will say that {\it Carleman's condition} holds for an element $t\in A$ if 
 \begin{align}\label{carleman}
 \sum_{n=1}^\infty L(t^{2n})^{-\frac{1}{2n}} =+\infty.
 \end{align}
 (Note that $L(t^{2n})\geq 0$, because the functional $L$ is positive.)
 
 The mai result of this paper is the following theorem.
 \begin{thm}\label{mpscylinder}
Let $L$ be a linear functional on $A$ such that $L(1)=1$. Suppose that  one of the following assumptions $(\rm i)$ or $(\rm ii)$ is satisfied:
 \begin{itemize}
\item[(i)]\, $L$ is $A_+$-positive, that is, $L(a)\geq 0$ for  $a\in A_+$.
\item[(ii)]\, $L$ is positive, that is, $L(a^2)\geq 0$ for $a\in A$, and     Carleman's condition (\ref{carleman}) holds for all $t$ of the vector space basis $S$ of $T$.
 \end{itemize}Then there exists a cylinder measure $\mu$ on $\cT'$ such that
 \begin{align}\label{mpalgebraA}
L(f)=\int_{{\cT'}} f(\chi) \, d\mu(\chi)\quad {\rm  for}~~ f\in A.
\end{align}
If  there exists a continuous seminorm $q$ on $\cT$ such that 
\begin{align}\label{contL}
L(t^2)\leq q(t)^2\quad{\rm for}\quad t\in \cT,
\end{align} then there is a continuous cylinder   measure
$\mu$  on $\cT'$ such that (\ref{mpalgebraA}) holds.

Further, if in addition $\cT'\subseteq \hat{A}\lceil T$, then $L$ is a moment functional on $A$ according to Definition \ref{defm}.
\end{thm}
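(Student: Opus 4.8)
The plan is to deduce the theorem from Theorem \ref{mpalgebraB}. First I would establish that, under either hypothesis (i) or (ii), the functional $L$ extends to a \emph{positive} linear functional $\tilde L$ on the localized algebra $B=Q^{-1}A$ with $\tilde L(1)=1$. Theorem \ref{mpalgebraB} then produces a cylinder measure $\mu$ on $\cT'$ with $\tilde L(b)=\int_{\cT'}b(\chi)\,d\mu(\chi)$ for all $b\in B$, and restricting this identity to $A\subseteq B$ gives (\ref{mpalgebraA}). Throughout I use that, by Lemma \ref{extensionchar}, restriction of characters identifies $\hat B$ with $\hat A$, and that each $q\in Q$ satisfies $q(\chi)\geq 1$ on $\hat B$ by (\ref{cht}).

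For case (i) the plan is a single application of the M.~Riesz extension theorem to the cone $B_+:=\{b\in B:\ b(\chi)\geq 0\ \text{for all}\ \chi\in\hat B\}$. One checks easily that $B_+=\{b\in B:\ qb\in A_+\ \text{for some}\ q\in Q\}$, that $B_+$ is a convex cone containing every square of $B$, and that $A\cap B_+=A_+$, so that $L$, being $A_+$-positive, is $B_+$-positive on the subspace $A$. The one point requiring a brief computation is that $A$ is majorizing in $B$ with respect to $B_+$: given $b=q^{-1}a\in B$, the element $a^2+1\in A$ dominates $b$, since for every $\chi\in\hat B$ one has $q(\chi)b(\chi)=a(\chi)$ and, using $q(\chi)\geq 1$, $q(\chi)\bigl(a(\chi)^2+1\bigr)-a(\chi)\geq\bigl(a(\chi)-\tfrac{1}{2}\bigr)^2+\tfrac{3}{4}>0$, whence $a^2+1-b\in B_+$. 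M.~Riesz then yields a $B_+$-positive, hence positive, linear extension $\tilde L$ of $L$ to $B$ with $\tilde L(1)=1$, and Theorem \ref{mpalgebraB} applies.

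For case (ii) the plan is to assemble $\tilde L$ from finite-dimensional data. For each finite subset $F_0=\{t_1,\dots,t_k\}$ of the basis $S$, let $A_{F_0}$ be the subalgebra of $A$ generated by the $t_i$ and $B_{F_0}$ the subalgebra of $B$ generated by the $t_i$ together with $(1+t_i^2)^{-1}$, so that $B=\bigcup_{F_0}B_{F_0}$. The restriction $L\lceil A_{F_0}$ is positive and, by hypothesis (ii), fulfils Carleman's condition (\ref{carleman}) for each generator $t_i$; hence by the Haviland--Carleman theorem for finitely generated algebras (\cite{schm2017}) it is a moment functional with a \emph{unique} representing Radon measure, which in the coordinates of (\ref{coordintegral}) becomes a normalized Radon measure $\rho_{F_0}$ on $\R^k$. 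Since $\rho_{F_0}$ has all polynomial moments finite and every $\tilde b$ with $b\in B_{F_0}$ is dominated by a polynomial, the formula $\tilde L(b):=\int_{\R^k}\tilde b(x)\,d\rho_{F_0}(x)$, $b\in B_{F_0}$, defines a positive unital functional extending $L\lceil A_{F_0}$. Determinacy (again from Carleman) forces the family $(\rho_{F_0})$ to be projective under the coordinate projections, so these functionals agree on overlaps and glue to a positive unital $\tilde L$ on $B$; Theorem \ref{mpalgebraB} then applies as in case (i).

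It remains to treat the continuity statement and the moment-functional conclusion, which are routine. For $t\in\cT$ and $a>0$ the set $\{\chi\in\cT':\ |\chi(t)|\geq a\}$ is a cylinder set with base in a one-dimensional factor space, and Chebyshev's inequality applied to the corresponding measure $\nu_\cF$ from Lemma \ref{finitecase} gives $\mu(\{|\chi(t)|\geq a\})\leq a^{-2}\int_{\cT'}\chi(t)^2\,d\mu(\chi)=a^{-2}L(t^2)$; if (\ref{contL}) holds this is at most $a^{-2}q(t)^2$, so the zero neighbourhood $U=\{t\in\cT:\ q(t)<a\sqrt{\varepsilon}\}$ establishes the continuity of $\mu$ in the sense of Definition \ref{defconti}. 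If moreover $\cT'\subseteq\hat A\lceil T$, then every element of $\cT'$ is a character of $A$ and, since each $f\in A$ is $\mu$-integrable by (\ref{replcylincer}), $\mu$ represents $L$ as an integral over characters, i.e.\ $L$ is a moment functional by Definition \ref{defm}. The entire difficulty of the theorem sits in the extension step --- Lemma \ref{finitecase} and Theorem \ref{mpalgebraB} already carry the analytic core. In case (i) the extension is the short majorization-plus-M.~Riesz argument above; in case (ii) the delicate point is the coherent patching of the finite-dimensional representing measures, which genuinely depends on the \emph{uniqueness} (determinacy) furnished by Carleman's condition --- without it the locally chosen $\rho_{F_0}$ need not be projective and no cylinder measure would emerge.
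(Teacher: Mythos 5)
Your proposal is correct and follows essentially the same route as the paper: case (i) by majorizing $A$ in $B=Q^{-1}A$ and extending $L$ to a $B_+$-positive functional (the paper states the majorization tersely; your explicit majorant $a^2+1$ and appeal to M.~Riesz fill in exactly what is meant), case (ii) by Nussbaum's theorem on each finitely generated subalgebra with determinacy supplying the compatibility of the measures, and the continuity and moment-functional claims by the same Chebyshev argument. The only (harmless) deviation is that in case (ii) you glue the finite-dimensional functionals into a positive functional on all of $B$ and reapply Theorem \ref{mpalgebraB}, whereas the paper assembles the cylinder measure directly from the family $\nu_\cF$; the determinacy argument doing the work is identical.
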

 \begin{proof}
First  assume (i). Let $B=Q^{-1}A$ the algebra of fractions defined above. Since the characters of $A$ are restrictions to $A$ of characters of $B$  by Lemma \ref{extensionchar}, we have $B_+\cap A=A_+$. Since all elements $a_i,b_i$ are bounded on $\hat{B}$ as shown in the proof of Lemma \ref{finitecase}, so are all elements of $Q^{-1}$. Hence for each $b\in B$ there exists $a\in A$ such that $b(\chi)\leq a(\chi)$ for all $\chi\in \hat{B}$. Therefore, the $A_+$-positive functional $L$ can be extended to a $B_+$-positive linear functional $\tilde{L}$ on $B$. From Theorem \ref{mpalgebraB}, applied to $\tilde{L}$, it follows that there exists a cylinder measure $\mu$ on $\cT'$ such that (\ref{mpalgebraA}) is satisfied.

 Now we suppose that  (ii) is satisfied. Let $\cF$ be a finite-dimensional linear subspace of $T$ and  $F:=\{f_1,\dots,f_k\}$  a basis of $\cF$. Let $A_F$ denote the unital subalgebra of $A$ generated by $f_1,\dots,f_k$. We define a linear functional $L_F$ on $\R[x_1,\dots,x_k]$ by $$L_F(p(x_1,\dots,x_k))=L(p(f_1,\dots,f_k)).$$ Since $L_F(x_j^{2n})=L(f_j^{2n})$ for $j{=}1,\dots,k$ and $n\in \N$, it follows from (ii) that $L_F$ is a positive functional on $\R[x_1,\dots,x_k]$ satisfying the multi-variate Carleman condition. Therefore, by Nussbaum's theorem, $L_F$ is a determinate moment functional on $\R[x_1,\dots,x_k]$, that is, $L_F$ has a unique representing Radon measure on $\R^k$. Therefore, there exists a unique Radon measure $\nu_\cF$ on  $\cF^*\cong \R^k$ such that 
 $$L(a)=\int_{\cF^*} a(\chi)\, d\nu_\cF(\chi)\quad{\rm for}~~a\in A_F.
 $$
  Proceeding  as in the proof of Theorem \ref{mpalgebraB} it follows that the family of measures $\nu_F$ give a well-defined cylinder measure on $\cT'$. The uniqueness of  measures $\nu_F$ used in a crucial manner in the proof of Theorem \ref{mpalgebraB}  follows now from the uniqueness of   representing measures of functionals $L_F$ because of the Carleman condition.

Now suppose that the continuity assumption (\ref{contL}) is fulfilled. Let $\varepsilon>0$ and $a>0$ be given. Set $\delta:=a\sqrt{\varepsilon}$ and   $U:=\{ t\in T: q(t)\leq \delta \}.$ Then, for $t\in U$, 
\begin{align*}
\mu( \{ \chi \in \cT': & |\chi(t)|\geq 1\}) \leq a^{-2}\int_{\cT'} \chi(t)^2 d\mu(\chi) \\& =a^{-2} \int_{\cT'} \chi(t^2) d\mu(\chi)= a^{-2}L(t^2) \leq a^{-2}q(t)^2\leq a^{-2}\delta^2=\varepsilon.
\end{align*}  
Thus, the condition in Definition \ref{defconti} is satisfied, so $\mu$ is continuous.

If  $\cT'\subseteq \hat{A}\lceil T$,  then Definition \ref{defm} is satisfied, so $L$ is a moment functional on $A.$
\end{proof}

\section{Nuclearity assumptions and representing measures}\label{nucasssec}

If we assume  the nuclearity of the space $\cT$ in Theorem \ref{mpscylinder} , then we get an "ordinary" $\sigma$-additive representing measure of $L$ rather than a cylinder measure. 
\begin{thm}\label{nucearass}
 Let $L$ be a linear functional on $A$ such that   one of the assumptions $(\rm i)$ or $(\rm ii)$ in  Theorem \ref{mpscylinder} holds. Suppose that $\cT$ is a nuclear locally convex Hausdorff space and there exists a continuous seminorm $q$ on $\cT$ such that 
\begin{align}\label{contL}
L(t^2)\leq q(t)^2\quad{\rm for}\quad t\in \cT.
\end{align} 
Then there exists a  measure  $\mu$ defined on the $\sigma$-algebra  $\cB(\cT)$ such that
 \begin{align}\label{mpalgebraA}
L(f)=\int_{{\cT'}} f(\chi) \, d\mu(\chi)\quad {\rm  for}~~ f\in A.
\end{align}
If $\cT'\subseteq \hat{A}\lceil T$, then $L$ is a moment functional with representing measure $\mu$.
\end{thm}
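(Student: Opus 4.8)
The plan is to reduce Theorem \ref{nucearass} to Theorem \ref{mpscylinder} together with Minlos' theorem (Theorem \ref{minlosthm}). First I would invoke Theorem \ref{mpscylinder}: under assumption (i) or (ii), and using the continuity estimate (\ref{contL}) with its continuous seminorm $q$, there exists a \emph{continuous} cylinder measure $\mu$ on $\cT'$ such that
\begin{align*}
L(f)=\int_{\cT'} f(\chi)\, d\mu(\chi)\qquad \text{for all } f\in A.
\end{align*}
Here the word "continuous" is exactly continuity in the sense of Definition \ref{defconti}, which is what the last part of Theorem \ref{mpscylinder} delivers from (\ref{contL}).

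Next I would apply Minlos' theorem. Since $\cT$ is by hypothesis a nuclear locally convex Hausdorff space, Theorem \ref{minlosthm} says that every continuous cylinder measure on $\cT'$ is $\sigma$-additive on the algebra $\cC(\cT)$ and hence extends to a genuine measure, still denoted $\mu$, on the $\sigma$-algebra $\cB(\cT)$ generated by the cylinder sets. This is the step that upgrades the cylinder measure to an "ordinary" measure, and it is the only place nuclearity is used.

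It then remains to check that the integral representation survives the extension, i.e. that
\begin{align*}
L(f)=\int_{\cT'} f(\chi)\, d\mu(\chi)\qquad \text{for all } f\in A
\end{align*}
continues to hold with $\mu$ now interpreted as the $\sigma$-additive measure on $\cB(\cT)$. This is essentially automatic: each $f\in A$ is a polynomial in finitely many elements of $S$, hence, when viewed as a function on $\cT'$, is $\cB(\cF)$-measurable for the finite-dimensional subspace $\cF\subseteq \cT$ spanned by those elements, and its integral against $\mu$ is computed via the finite-dimensional marginal $\nu_\cF$ on $\cF^*\cong\cT'/\cF^\circ$ from Lemma \ref{finitecase}. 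Since the extension of $\mu$ provided by Minlos' theorem restricts to the same marginals $\nu_\cF$ on each cylinder $\sigma$-algebra, the value of $\int_{\cT'} f\, d\mu$ is unchanged; integrability of $f$ is inherited from its integrability against $\nu_\cF$, which was already established in constructing the cylinder measure. Finally, if in addition $\cT'\subseteq\hat A\lceil T$, then $\mu$ is supported on characters and the representation (\ref{mpalgebraA}) exhibits $L$ as an integral over characters, so $L$ is a moment functional with representing measure $\mu$ in the sense of Definition \ref{defm} (now with an honest $\sigma$-additive measure).

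I do not expect any genuine obstacle here; the theorem is really a packaging of the two preceding deep results. The only point requiring a line of care is the compatibility of the finite-dimensional marginals before and after the Minlos extension — but this is built into the statement of Minlos' theorem, since the extended measure by construction induces the given cylinder-set values, hence the given $\nu_\cF$.
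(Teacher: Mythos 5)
Your proposal is correct and follows essentially the same route as the paper: apply Theorem \ref{mpscylinder} to obtain a continuous representing cylinder measure and then upgrade it to a $\sigma$-additive measure on $\cB(\cT)$ via Minlos' theorem. The only detail you skip is that Theorem \ref{mpscylinder} assumes $L(1)=1$ while Theorem \ref{nucearass} does not, so one should first dispose of the case $L(1)=0$ (where positivity and Cauchy--Schwarz force $L=0$, and $\mu=0$ works) and otherwise apply Theorem \ref{mpscylinder} to the normalized functional $L(1)^{-1}L$, exactly as the paper does.
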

\begin{proof}
Assumption (i) implies that $L$ is positive. Therefore, if $L(1)=0$, then $L=0$ by the Cauchy-Schwarz inequality, so the assertion holds trivially with $\mu=0$.

 Suppose that $L(1)\neq 0.$  Then $L(1)>0$ by (i) or (ii) and Theorem \ref{mpscylinder}  applies to the functional $L(1)^{-1}L$ on $A$. Since the space $\cT$ is nuclear, the corresponding cylinder measure is a measure by Minlos' Theorem
\ref{minlosthm}.
\end{proof}

By Minlos theorem,  the nuclearity of the space $\cT$ implies  that  each cylinder measure  on $\cT'$ is indeed a measure. A necessary and sufficient  condition when a given cylinder measure is a measure   is provided by Sazonov's theorem \ref{sazonov}. We do not restate the corresponding result in the present setting.

As an important special case  we can take the whole algebra $A$ as vector space $T.$ Then,  $\hat{A}=\hat{A}\lceil T$, so the assumption   $\cT'\subseteq \hat{A}\lceil T$ is trivially satisfied and Theorems \ref{mpscylinder}  and \ref{nucearass} provide integral representations of the corresponding functional $L$ over characters. 

Now we begin with the preparations for the next main result (Theorem \ref{intcharalgebra}).

A real algebra $C$ equipped with a locally convex topology is called a {\it topological algebra} if for each $b\in C$  the  mappings $c\to bc$ and $c\to ab$ are continuous on $C$.

A locally convex space $V$ is called {\it barrelled} if each absolutely convex closed absorbing subset of $V$ is a zero neighbourhood \cite[Chapter II, Section 7]{schaefer}. For instance, each complete metrizable locally convex space is barrelled.

The following  lemma is a known result  (see e.g. \cite[Corollary 3.6.]{Sch1990}) adapted to the present context.
\begin{lem}\label{barrecont}
Suppose that $C$ is a barrelled topological real algebra. If $L$ is a continuous positive linear functional on $C$,  there exists a continuous seminorm $q$ on $C$ such that $L(c^2)\leq q(c)^2$ for all $c\in C$.
\end{lem}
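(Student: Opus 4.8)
The plan is to deduce Lemma \ref{barrecont} from the barrelledness of $C$ by manufacturing a suitable continuous seminorm out of the positive functional $L$. The natural candidate is the Cauchy--Schwarz seminorm
$$
q(c) := L(c^2)^{1/2},\qquad c\in C,
$$
which is well defined because positivity of $L$ gives $L(c^2)\geq 0$; it is a seminorm precisely because $\langle a,b\rangle := L(ab)$ is a (possibly degenerate) positive semidefinite bilinear form on $C$, so the Cauchy--Schwarz inequality $|L(ab)|\leq q(a)q(b)$ holds and in particular the triangle inequality $q(a+b)\leq q(a)+q(b)$ follows in the usual way; homogeneity is clear. Once we know $q$ is a continuous seminorm on $C$ we are done, since $L(c^2)=q(c)^2$ by construction, which is even slightly stronger than the asserted inequality.

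The real work is therefore to show that $q$ is continuous, and this is where barrelledness enters. First I would observe that $q$ is \emph{lower semicontinuous}: for fixed $c$, the map $t\mapsto L(c + tc')$ is continuous in the topology of $C$ for each $c'$ since multiplication and $L$ are continuous, and $q(c)^2 = L(c^2)$ is a continuous function of $c$ only if we already knew continuity — so instead the cleaner route is to write $q$ as a supremum of continuous functions. Concretely, Cauchy--Schwarz gives
$$
q(c) = \sup\{\, |L(cd)| : d\in C,\ q(d)\leq 1 \,\},
$$
and each $c\mapsto L(cd)$ is continuous on the topological algebra $C$, so $q$, being a supremum of continuous seminorms, is lower semicontinuous. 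Then the closed unit ball $B_q := \{c\in C : q(c)\leq 1\}$ is absolutely convex (convexity and balancedness are immediate from $q$ being a seminorm) and closed (by lower semicontinuity). To invoke barrelledness it remains to check that $B_q$ is absorbing, i.e. that $q(c)<\infty$ for every $c\in C$; but that is exactly the statement $L(c^2)<\infty$, which holds for all $c$ because $L$ is a genuine functional on $C$. Hence $B_q$ is a barrel, so by the definition of barrelled space $B_q$ is a zero neighbourhood, which means $q$ is continuous.

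I expect the only subtle point to be the justification that $q$ is lower semicontinuous, i.e. the representation of $q$ as a supremum of continuous seminorms $c\mapsto |L(cd)|$; one must be a little careful that the indexing set $\{d : q(d)\leq 1\}$ is nonempty and that the supremum genuinely equals $q(c)$, which is the equality case discussion in Cauchy--Schwarz (if $q(c)=0$ the identity is trivial, and if $q(c)>0$ one takes $d = c/q(c)$). Everything else — that $q$ is a seminorm, that $B_q$ is absolutely convex, absorbing and closed, and the final appeal to barrelledness — is routine. Note that completeness or metrizability of $C$ is not needed; barrelledness is exactly the abstract property that converts a closed absolutely convex absorbing set into a neighbourhood of $0$, which is precisely what is required here.
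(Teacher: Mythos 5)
Your proposal is correct and follows essentially the same route as the paper: both establish the Cauchy--Schwarz representation $L(c^2)^{1/2}=\sup_b|L(cb)|$, use it (together with continuity of $L$ and of multiplication) to show that the set $\{c: L(c^2)\leq 1\}$ is a barrel, and then invoke barrelledness to make it a zero neighbourhood. Your packaging of the closedness step as lower semicontinuity of a supremum of continuous functions is just a rephrasing of the paper's net argument, and your observation that one may take $q(c)=L(c^2)^{1/2}$ itself as the continuous seminorm is made parenthetically in the paper as well.
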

\begin{proof}
First we prove the equality 
\begin{align}\label{cauchyscwar}
L(c^2)^{1/2}=\sup\, \{ |L(cb)|: b\in C; L(b^2)=1\}.
\end{align} Let $M_c$ denote the supremum on the right hand side of (\ref{cauchyscwar}). Since $L$ is positive, the Cauchy-Schwarz inequality yields\,  $|L(cb)|\leq L(c^2)^{1/2}L(b^2)^{1/2}=L(c^2)^{1/2}.$ Hence   $M_c\leq L(c^2)^{1/2}$. Further, if $L(c^2)=0$, then $M_c=0$ by this equality. Now suppose that $L(c^2)>0$. Then  $b:=cL(c^2)^{-1/2}$ satisfies  $L(b^2)=1$ and $|L(cb)|=L(c^2)^{1/2}$, so that $L(c^2)^{1/2}\leq M_c$. Putting the preceding together we have proved (\ref{cauchyscwar}).

Now we set $U:=\{ c\in C: L(c^2)^{1/2}\leq 1\}.$ Obviously, $U$ is absorbing. From (\ref{cauchyscwar}) we easily conclude that the set $U$ is absolutely convex. 

We prove that $U$ is closed in $C$. Let $(c_i)_{i\in I}$ be a net from $U$ converging to some element $c\in C$. Fix $b\in C$ such that $L(b^2)=1$. Since $L$ is continuous and the map $a\to ab$ on $C$ is also continuous (because $A$ is a topological algebra), we conclude that 
$\lim_i L(c_ib)=L(cb)$. From  $c_i\in U$ and (\ref{cauchyscwar}) it follows  that  $|L(c_ib)|\leq 1$, so that $|L(cb)|\leq 1.$ Therefore, again by (\ref{cauchyscwar}), $L(c^2)^{1/2}\leq1$, that is, $c\in U$. This shows that $U$ is closed. 

Since the locally convex space $C$ is barrelled, $U$ is a zero neighbourhood in $C$. Hence there exists a continuous seminorm $q$ on $C$ such that $\{ c\in C: q(c)\leq 1\}\subseteq U.$ This  implies that $L(c^2)^{1/2}\leq q(c)$ for all $c\in C$. (In fact, $r(c):=L(c^2)^{1/2}, c\in C,$ defines  a seminorm $r$ on $C.$ Hence  $r$ is continuous on $C$ and we could  take $r=q$.)
\end{proof}
Under assumption (i) the next result was proved  by Borchers and Yngavson \cite{by}. 
\begin{thm}\label{intcharalgebra}
Suppose that the commutative real unital algebra $A$ carries a nuclear barrelled locally convex covex topology such that it is a topological algebra. Let $L$ be a continuous linear functional on $A$ satisfying  one of the following assumptions:
 \begin{itemize}
\item[(i)]\, $L$ is $A_+$-positive.
\item[(ii)]\, $L$ is positive and   Carleman's condition (\ref{carleman}) holds for all $t$ of some vector space basis of $A$.
 \end{itemize}
 Then there exists a measure $\mu$ on the $\sigma$-algebra $\cB(\hat{A})$ such that
\begin{align}
L(a)=\int_{\hat{A}} a(\chi)\, d\mu(\chi)\quad {\rm for}~~~a\in A.  
\end{align}
\end{thm}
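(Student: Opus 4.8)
The plan is to reduce Theorem \ref{intcharalgebra} to the already-established Theorem \ref{nucearass} by taking $T=A$. First I would set $\cT$ to be the algebra $A$ itself, equipped with the given nuclear barrelled locally convex topology; then $S$ is any vector space basis of $A=T$. Since $\cT$ is nuclear and Hausdorff, the first hypothesis of Theorem \ref{nucearass} on the space is in place. The key point is that $\hat{A}\lceil T=\hat{A}$ trivially, so the inclusion $\cT'\subseteq\hat{A}\lceil T$ required at the end of Theorem \ref{nucearass} need not be checked directly — rather, one observes that every character $\chi\in\hat{A}$ is automatically continuous on $A$ in the given topology (so that $\hat{A}\subseteq\cT'$, which is the direction actually used to identify the integration domain $\cT'$ with a subset of characters, and in fact one can argue $\cT'\cap\hat{A}$ carries the measure). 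Here one should be slightly careful: the representing cylinder/$\sigma$-additive measure produced lives on $\cT'=A'$, and one needs to know it is concentrated on $\hat{A}$; this follows because the Radon measures $\nu_\cF$ of Lemma \ref{finitecase} are supported on $\cF^*$ identified with images of characters, exactly as in the proof of Theorem \ref{mpscylinder}.

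Second I would verify the continuity-type bound \eqref{contL}. This is precisely where Lemma \ref{barrecont} enters: since $A$ is a barrelled topological algebra and $L$ is a continuous positive linear functional — note that under assumption (i), $A_+$-positivity implies positivity because all squares lie in $A_+$ — Lemma \ref{barrecont} furnishes a continuous seminorm $q$ on $A$ with $L(c^2)\le q(c)^2$ for all $c\in A$, which is exactly hypothesis \eqref{contL} of Theorem \ref{nucearass} with $\cT=A$ and $T=A$. Under assumption (ii) the functional $L$ is positive by hypothesis, so Lemma \ref{barrecont} again applies verbatim; the Carleman condition is assumed for a basis of $A=T$, matching assumption (ii) of Theorem \ref{mpscylinder} as carried into Theorem \ref{nucearass}.

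Third, having checked that all hypotheses of Theorem \ref{nucearass} are met, I would invoke it directly: there is a measure $\mu$ on $\cB(\cT)=\cB(A)$ (with the weak topology, equivalently on $\cB(\hat A)$ after restriction) with $L(f)=\int_{\cT'}f(\chi)\,d\mu(\chi)$ for all $f\in A$. Since $\mu$ is concentrated on characters, this reads $L(a)=\int_{\hat A}a(\chi)\,d\mu(\chi)$ for $a\in A$, which is the claimed formula. The only nontrivial bookkeeping is the identification of $\cB(\hat A)$ with the trace of $\cB(\cT)$ on $\hat A$ and the fact that $\mu(\cT'\setminus\hat A)=0$; both are immediate from the construction of $\mu$ via the pushforwards of the measures $\nu_\cF$ in Lemma \ref{finitecase}, which by Lemma \ref{extensionchar} are supported on (images of) characters.

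The main obstacle — really the only substantive point beyond assembling the pieces — is confirming that the representing measure obtained from Theorem \ref{nucearass} genuinely lives on $\hat{A}$ rather than on all of $A'$, i.e. that the ``moment functional'' conclusion (not merely the abstract integral representation over $\cT'$) is available. This is handled by the clause ``if $\cT'\subseteq\hat A\lceil T$'' in Theorem \ref{nucearass}: with $T=A$ one has $\hat A\lceil T=\hat A$, and since each character is continuous on the topological algebra $A$ (an algebra homomorphism into $\R$ that is automatically continuous on a barrelled topological algebra, or alternatively because the finite-dimensional Radon measures are supported on character images), the support is contained in $\hat A$. Everything else is a direct citation of Lemma \ref{barrecont} and Theorem \ref{nucearass}.
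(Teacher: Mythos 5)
Your proposal matches the paper's proof essentially verbatim: the paper also sets $T=A$, invokes Lemma \ref{barrecont} to produce the continuous seminorm $q$ with $L(a^2)\le q(a)^2$, and then cites Theorem \ref{nucearass}. The extra concern you raise about the representing measure being concentrated on $\hat A$ rather than on all of $A'$ is a genuine subtlety, but the paper handles it no more carefully than you do --- it simply appeals to the remark after Theorem \ref{nucearass} that with $T=A$ the condition $\cT'\subseteq\hat A\lceil T$ is satisfied --- so your argument is in line with the paper's.
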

\begin{proof}
Since $A$ is a barrelled topological algebra,  by Lemma \ref{barrecont} there is a continuous seminorm $q$ on $A$ such that $L(a^2)\leq q(a)^2$ for $a\in A$.
Therefore, setting $T=A$,  all assumptions of Theorem \ref{nucearass} are fulfilled and the assertion follows from Theorem \ref{nucearass}.
\end{proof}

\section{Moment problem for  symmetric algebras over vector spaces}\label{mpsymmetricalgebras}

Let $V$ be a real vector space. First we recall the definition of the symmetric algebra $S(V)$ over $V$. 

For $n\in \N$,  $V_n=V\otimes \cdots\otimes V$ denotes the $n$-fold tensor product of $V$ and  $S_n(V)$ is the subspace of symmetric tensors. For $v=\sum_i v_{1i}\otimes\cdots\otimes v_{ni}\in V_n$ we set $$s(v):=\frac{1}{n!} \sum_\theta\sum_i v_{\theta(1)i}\otimes \cdots\otimes v_{\theta(n)i},$$
where the summation is over all permutation $\theta$ of $\{1,\dots,n\}.$ Set $S_1(V):=V$ and $S_0(V):=\R$. 
Let $S(V)$ be the direct sum of vector spaces $S_n(V)$,  $n\in \N_0$. Then $S(V)$ becomes a  commutative unital real algebra with product determined by
$$(v\cdot w)_m=\sum_{k+n=m} s(v_k\otimes w_n) \quad {\rm for}~~ v=(v_k),w=(w_k)\in S(V)$$ with the obvious interpretations $v_0\otimes w_n=v_0w_n$, $v_n\otimes w_0=w_nv_0$,  $s(v_0\otimes w_0)=v_0w_0$ and the unit element $1=(1,0\dots).$ That is, $S(V)$ is the  free commutative real unital algebra over the vector space $V$; it is called the {\it symmetric algebra} over $V$. 

Let $V^*$ be the dual vector space of $V$. Since $S(V)$ is the free commutative algebra over $V$, each functional $L\in V^*$ extends uniquely to a character of $S(V)$. Hence the restriction $L\to L\lceil V$ is a bijection of $\widehat{S(V)}$ and $V^*$. That is, the linear functionals on $V$ a are in one-to-one  correspondence with characters of the algebra $S(V).$ 
Letting $A=S(V)$ and $T=V$, we are in the setup described at the beginning of   Section \ref{momentfunccylsec} and we have $\hat{A}\lceil T=V^*.$ 

The second assertion of the next theorem is the well-known solution of the moment problem over nuclear spaces. It was first obtained independently in \cite{by} and \cite{cs} by using different methods. An elegant approach based on Choquet theory was sketched in \cite{heg} and ealaborated in \cite[Section 15.1]{Sch1990}.
\begin{thm}\label{mpsymmetrc}
Let $V$ be a real vector space and let $L$ be a linear functional on $S(V)$ such that $L(1)=1$. Suppose one of the assumptions $(\rm i)$ or $(\rm ii)$ in  Theorem \ref{mpscylinder} is satisfied. Then $L$ is a moment functional (as in Definition \ref{defm} with $\cT'=V^*$), so there exists a cylinder measure on $V^*$ such that
 \begin{align}\label{mpalgebraA}
L(f)=\int_{{V^*}} f(\chi) \, d\mu(\chi)\quad {\rm  for}~~ f\in S(V).
\end{align}
Suppose in addition that $V$ is equipped with a nuclear locally convex Hausdorff topology and there exists a continuous seminorm $q$ on $V$ such that 
\begin{align}\label{contL}
L(t^2)\leq p(t)^2\quad{\rm for}\quad t\in V.
\end{align}
 Then there exists a    measure
$\mu$  on the $\sigma$-algebra $\cB(V')$ of\, $V'$ such that 
\begin{align}\label{mpalgebraA}
L(f)=\int_{{V'}} f(\chi) \, d\mu(\chi)\quad {\rm  for}~~ f\in S(V).
\end{align} 
\end{thm}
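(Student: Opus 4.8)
The plan is to reduce Theorem \ref{mpsymmetrc} to the machinery already built up, namely Theorem \ref{mpscylinder} for the cylinder-measure statement and Theorem \ref{nucearass} for the upgrade to a genuine $\sigma$-additive measure. The key point to exploit is the identification, recalled just above the theorem, that for $A = S(V)$ and $T = V$ one has $\hat A \lceil T = V^*$ and, more precisely, the restriction map $\widehat{S(V)} \ni L \mapsto L\lceil V \in V^*$ is a bijection. Thus, whatever locally convex Hausdorff topology we put on $\cT = V = T$, the dual $\cT'$ is automatically a subset of $V^* = \hat A \lceil T$, so the extra hypothesis "$\cT' \subseteq \hat A\lceil T$" appearing in Theorems \ref{mpscylinder} and \ref{nucearass} is satisfied for free in this setting.

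For the first assertion I would argue as follows. We are given $L(1) = 1$ and either (i) $A_+$-positivity or (ii) positivity together with Carleman's condition for all elements of a vector space basis $S$ of $T = V$. These are exactly the hypotheses of Theorem \ref{mpscylinder}. Applying that theorem (with $\cT = V$ carrying, say, the finest locally convex topology, or any locally convex Hausdorff topology one wishes, since only $\cT' \subseteq \hat A\lceil T$ is needed and this holds as noted above), we obtain a cylinder measure $\mu$ on $\cT' \subseteq V^*$ with $L(f) = \int f(\chi)\,d\mu(\chi)$ for all $f \in S(V)$, and the final clause of Theorem \ref{mpscylinder} tells us $L$ is a moment functional in the sense of Definition \ref{defm}. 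If one wants the representation literally over all of $V^*$, one extends $\mu$ by zero to the cylinder algebra of $V^*$; this is harmless since $\cT'$ sits inside $V^*$ and all the polynomial functions $f \in S(V)$ restrict correctly.

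For the second assertion, suppose in addition that $V$ carries a nuclear locally convex Hausdorff topology and there is a continuous seminorm $p$ on $V$ with $L(t^2) \le p(t)^2$ for $t \in V$. Take $\cT = V$ with this topology. Then all hypotheses of Theorem \ref{nucearass} are in place: one of (i), (ii) holds, $\cT$ is nuclear Hausdorff, the seminorm bound (\ref{contL}) holds with $q = p$, and $\cT' = V' \subseteq V^* = \hat A\lceil T$. Theorem \ref{nucearass} then yields a $\sigma$-additive measure $\mu$ on $\cB(\cT) = \cB(V')$ representing $L$, and its last sentence says $L$ is a moment functional with representing measure $\mu$. Internally, of course, this is Theorem \ref{mpscylinder} producing a continuous cylinder measure (the continuity coming precisely from the seminorm bound, via the Chebyshev-type estimate in the proof of Theorem \ref{mpscylinder}) followed by Minlos' Theorem \ref{minlosthm} converting it to a $\sigma$-additive measure on $\cB(V')$.

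There is essentially no serious obstacle here: the theorem is a specialization, and the only things to check carefully are bookkeeping. The first is the (already recalled) fact that characters of $S(V)$ are in bijection with $V^*$, which guarantees $\cT' \subseteq \hat A\lceil T$ in every relevant topology and is what makes $L$ genuinely a moment functional rather than merely an integral over some abstract dual. The second minor point is the mismatch of domains $V^*$ versus $\cT' = V'$ in the two halves of the statement, handled by a trivial zero-extension of the cylinder measure in the first part and by the identification $\cB(\cT) = \cB(V')$ in the second. The subtlety worth flagging — though it is hidden inside Theorem \ref{mpscylinder} and not something we need to reprove — is that without nuclearity one genuinely cannot expect more than a cylinder measure; the sharpness of this (examples with representing cylinder measures but no representing $\sigma$-additive measure) is exactly what Section \ref{Gausssec} is devoted to.
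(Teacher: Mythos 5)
Your proposal is correct and follows essentially the same route as the paper: equip $V$ with its finest locally convex topology so that $\cT'=V^*=\hat{A}\lceil T$ and invoke Theorem \ref{mpscylinder} for the first assertion, then use the given nuclear topology and apply Theorem \ref{nucearass} (i.e.\ Theorem \ref{mpscylinder} plus Minlos' theorem) for the second. The only cosmetic difference is that your ``extension by zero'' remark is never needed, since with the finest topology one has $\cT'=V^*$ on the nose.
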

\begin{proof}
For the first assertion  we equipp the vector space $V$ with its finest locally convex topology. Then $\cT'=V^*=\hat{A}\lceil T$, so $L$ is a moment functional by the first assertion of  Theorem \ref{mpscylinder}.

If $V$ is a nuclear space and (\ref{contL}) holds, then $\cT'=V'\subseteq \hat{A}\lceil T$ and the second assertion of   Theorem \ref{mpscylinder} applies.
\end{proof}
\begin{exa}
Let $V$ be a vector space with a countable Hamel basis. Then $S(V)$ is (isomorphic to) the polynomial algebra $\R[x_1,\dots,x_n,\dots]$ in countably many indeterminants $x_n, n\in \N.$ Further, the finest locally convex topology on $V$ is nuclear and each seminorm (in particular, the seminorm  $t\to L(t^2)^{1/2}$) is continuous in this topology. Thus, Theorem \ref{mpsymmetrc} applies, hence each $S(V)_+$-positive linear functional on $S(V)$ is a moment functional with representing measure on $\cB(V')$. In a slightly different  formulation this result was proved in \cite{gik}.
\end{exa}
\section{Gaussian measures}\label{Gausssec}
Let us begin with the finite-dimensional case. Let $(\cdot,\cdot)$ be a scalar product on $\R^d$ and let $dx$ be the Lebesgue measure on $\R^d$ for this scalar product. For a Borel subset $M$ of $\R^d$ we define
\begin{align}
\nu(M)=(2\pi)^{-d/2}\int_M e^{-(x,x)/2} dx.
\end{align}
 Then $\nu$ is a Radon measure on $\R^d$, called the (standard) {\it Gaussian measure associated with}  $(\cdot,\cdot).$ It can be nicely characterized in terms of its Fourier transform: $\nu$ is the unique probability Radon measure on $\R^d$ satisfying
\begin{align*}
\int_{\R^d} e^{{\ii} (x,y)} d\nu(x) = e^{-(y,y)/2}~~~{\rm for}~~y\in \R^d.
\end{align*}
Obviously,  $\int_{\R^d} p(x) d\nu(x)$ is finite for each polynomial $p\in \R[x_1,\dots,x_d]$.
\smallskip

Now we suppose that $(\cdot,\cdot)$ is a scalar product on a locally convex space $V$.

Let $\cF$ be a finite-dimensional subspace of $V$. We identify $\cF$ with some $\R^d$ and denote by $\nu_\cF$ the Gaussian measure on $\cF$ associated with the restriction of the scalar product $(\cdot,\cdot)$ to $\cF$. The finite-dimensional Hilbert space $(\cF,(\cdot,\cdot))$ is canonically isomorphic to its dual and so to $V'/ \cF^\circ$. Let $I_\cF$ denote this isomorphism. Then $ \mu_\cF(\cdot):= \nu_\cF(I_\cF^{-1}(\cdot))$ defines a Radon measure on   $V'/ \cF^\circ$. These measures $\mu_\cF$ satisfy the compatibility condition (\ref{compcylicner}), so they define a cylinder measure $ \mu:=\mu_{(\cdot,\cdot)}$ on $V'$,  called the  {\it Gaussian cylinder measure} associated with the scalar product $(\cdot,\cdot).$ 

If the scalar product $(\cdot,\cdot)$ is continuous on $V\times V$,  it follows easily that $\mu_{(\cdot,\cdot)}$ satisfies the  continuity condition in Definition (\ref{defconti}), so $\mu_{(\cdot,\cdot)}$ is continuous. Details and proofs for the preceding construction can be found e.g. in \cite[Ch. IV, \S 3]{gw4}.
\begin{thm}\label{gaussmf}
Let $V$ be a real locally convex Hausdorff space. 
Suppose that 
$(\cdot,\cdot)$ is a scalar product on  $V$ which is continuous on $V\times V$. 
Then  $\mu_{(\cdot,\cdot)}$ is a continuous cylinder measure on $V'$ and there exist a determinate moment functional $L$ on $A=S(V)$ such that 
\begin{align}\label{defLmuscal}
L(f)=\int_{{\cT'}} f(\chi) \, d\mu_{(\cdot,\cdot)}(\chi)\quad {\rm  for}~~ f\in S(V).
\end{align}
If the locally convex space $E$ is nuclear, then  $\mu_{(\cdot,\cdot)}$ is  measure on $\cB(V')$.
\end{thm}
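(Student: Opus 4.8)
The plan is to verify the three assertions of Theorem~\ref{gaussmf} in turn, using the machinery already assembled. First I would observe that the statement that $\mu_{(\cdot,\cdot)}$ is a continuous cylinder measure on $V'$ is precisely the content of the construction recalled just before the theorem: the finite-dimensional Gaussian measures $\mu_\cF$ are probability Radon measures satisfying the compatibility condition~(\ref{compcylicner}) because marginals of Gaussian measures are Gaussian with the restricted covariance, and continuity of $(\cdot,\cdot)$ on $V\times V$ gives the bound in Definition~\ref{defconti}. So the first sentence requires essentially no new work beyond citing \cite[Ch. IV, \S 3]{gw4}.

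Next I would produce the functional $L$ and check that it is well-defined and a moment functional. Since each $f\in S(V)$ lies in the subalgebra generated by some finite-dimensional $\cF\subseteq V$, the function $\chi\mapsto f(\chi)$ on $V'$ factors through $V'/\cF^\circ\cong(\cF,(\cdot,\cdot))$ as a polynomial on that finite-dimensional space, and such polynomials are $\nu_\cF$-integrable because Gaussian measures have all moments. Hence the right-hand side of~(\ref{defLmuscal}) is finite, and the compatibility of the $\mu_\cF$ makes the value independent of the chosen $\cF$, so $L$ is a well-defined linear functional on $S(V)$ with $L(1)=1$ and $L(f^2)\geq 0$ (positivity of each $\nu_\cF$). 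Taking $\cT$ to be $V$ with its given topology, we have $\cT'=V'\subseteq\widehat{S(V)}\lceil V=V^*$, so $L$ is a moment functional in the sense of Definition~\ref{defm}.

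For determinacy I would argue fibre-wise: if $\mu'$ is any representing cylinder measure for $L$ on $V'$, then for each finite-dimensional $\cF$ its induced measure $\nu'_\cF$ on $\cF^*\cong\R^d$ represents the same polynomial functional as $\nu_\cF$, namely $p\mapsto L(p(f_1,\dots,f_d))$. The Gaussian moment functional on $\R^d$ is a classical determinate moment functional (its moments satisfy the multivariate Carleman condition, or one invokes the explicit Fourier-transform characterization recalled above), so $\nu'_\cF=\nu_\cF$ for every $\cF$, whence $\mu'=\mu_{(\cdot,\cdot)}$. Finally, if $V$ is nuclear, Minlos' Theorem~\ref{minlosthm} applied to the continuous cylinder measure $\mu_{(\cdot,\cdot)}$ yields a genuine $\sigma$-additive measure on $\cB(V')$ extending it, and~(\ref{defLmuscal}) persists by monotone convergence on the increasing net of finite-dimensional subspaces.

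The only genuinely delicate point is the determinacy claim, since "determinate'' must be interpreted within the category of cylinder measures rather than Radon measures; the reduction to the finite-dimensional Gaussian case above is what makes it go through, and I expect the routine bookkeeping there — checking that two representing cylinder measures induce the same finite-dimensional marginals — to be the main thing to spell out carefully. Everything else is an assembly of Theorem~\ref{mpscylinder}, Theorem~\ref{minlosthm}, and the classical finite-dimensional facts.
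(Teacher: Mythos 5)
Your proposal is correct and follows essentially the same route as the paper's proof: well-definedness of $L$ from the finiteness of polynomial integrals against finite-dimensional Gaussian measures, determinacy by reducing any representing cylinder measure to its finite-dimensional marginals and invoking the determinacy of the finite-dimensional Gaussian, and Minlos' theorem for the nuclear case. No substantive differences to report.
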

\begin{proof} Since  for  Gaussian measures on  finite-dimensional spaces integrals of polynomials are finite, the integrals in (\ref{defLmuscal}) are finite. Hence equation (\ref{defLmuscal}) gives a well-defined  moment  functional $L$ on $A$.

Let $\tau$ be representing cylinder measure for the  moment functional $L$. Then for  each finite-dimensional subspace $\cF$ of $E$, $\tau_\cF$ and $\mu_\cF$ have the same moment functional on  $E'/ \cF^\circ$ and so  have their push forwards on $\cF.$  But $\nu_\cF$ (as defined above) is a Gaussian measure on $\R^d$. It it well-known and follows (for instance) from \cite[Corollary 14.24]{schm2017} that $\nu_\cF$ is determinate. Therefore, both push forwards coincide on $\cF$. Hence $\tau_\cF=\mu_\cF$ which implies that  $\tau=\mu_{(\cdot,\cdot)}$. This shows that  $L$ has a unique representing cylinder measure $\mu_{(\cdot,\cdot)}$, that is, $L$ is determinate. 

If the space $E$ is nuclear,  the cylinder measure $\mu_{(\cdot,\cdot)}$ is indeed a measure  by Minlos' Theorem   \ref{minlosthm}.
\end{proof}

Further, the Gaussian cylinder measure $\mu_{(\cdot,\cdot)}$ yields a positive definite functions $\widehat{\mu_{(\cdot,\cdot)}}$ on $V$, called the Fourier transform of $\mu_{(\cdot,\cdot)}$, defined by
 \begin{align}
\widehat{\mu_{(\cdot,\cdot)}}(y)=\int_{{V'}} e^{{\sf i} \, (\chi,y)} d\mu_{(\cdot,\cdot)}(\chi),\quad  y\in V.
\end{align}
Cylinder measures are often studied in terms of their Fourier transforms and there is a Bochner-Minlos theorem, see e.g. \cite[Chapter IV, \S 4]{gw4}.

 Now we specialize to the case where $V$ is a real separable Hilbert space with scalar product $\langle \cdot,\cdot\rangle$ and assume that the scalar product $(\cdot,\cdot)$  is continuous on $V$. From the latter it follows that there exists a bounded positive self-adjoint operator $b$ on $V$ such that $\ker b=\{0\}$ and 
 \begin{align}\label{bscalarprod}
 (x,y)=\langle bx,y\rangle, \quad 
 x,y \in V.
 \end{align} 
 Conversely, each positive bounded operator $b$ on $V$ with trivial kernel defines a continuous scalar product on $E$ by (\ref{bscalarprod}).
 
 The following result characterizes when the cylinder measure $\mu_{(\cdot,\cdot)}$ is a measure.
 \begin{prop}\label{mstheorem}
 The Gaussian cylinder measure $\mu_{(\cdot,\cdot)}$  on $V'$ defined above yields a measure on the $\sigma$-algebra $\cB(V')$ if and only if the operator $b$ is trace class.
 \end{prop}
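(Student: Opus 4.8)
The plan is to reduce the claim to a statement about the one‑dimensional marginals of $\mu:=\mu_{(\cdot,\cdot)}$ and then apply Sazonov's theorem (Theorem~\ref{sazonov}). The preliminary computation I would record is: for $t\in V$, $t\neq 0$, the set $C_t:=\{\chi\in V':|\chi(t)|\geq 1\}$ is a cylinder set whose generating subspace is spanned by $t$, and by the construction of $\mu_{(\cdot,\cdot)}$ recalled above its measure $\mu(C_t)$ equals the push‑forward of the one‑dimensional Gaussian $\nu_{\R t}$ under the linear map $x\mapsto(x,t)$ on $\R t$. Since $\nu_{\R t}$ is the standard Gaussian of $(\R t,(\cdot,\cdot))$, this push‑forward is the centered normal law on $\R$ of variance $(t,t)=\langle bt,t\rangle$, which is strictly positive because $\ker b=\{0\}$. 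Thus, if $X$ denotes a random variable with that law,
\begin{align*}
\mu(C_t)=\dP(|X|\geq 1),\qquad X\sim N(0,\langle bt,t\rangle).
\end{align*}

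For sufficiency, suppose $b$ is of trace class. I would apply Theorem~\ref{sazonov} with the constant sequence $b_n:=b$, which consists of positive nuclear operators by hypothesis. Given $\varepsilon>0$, set $\delta:=\varepsilon$. If $\langle b_nt,t\rangle=\langle bt,t\rangle\leq\delta$, then Chebyshev's inequality applied to the centered variable $X$ of variance $\langle bt,t\rangle$ gives $\mu(C_t)=\dP(|X|\geq 1)\leq\langle bt,t\rangle\leq\varepsilon$. Hence $\mu$ is continuous in the Sazonov topology associated with $(b_n)$, so $\mu$ is $\sigma$‑additive and extends to a measure on $\cB(V')$ by Theorem~\ref{sazonov}.

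For necessity, suppose $\mu$ is $\sigma$‑additive. By Theorem~\ref{sazonov} there is a sequence $(b_n)$ of positive nuclear operators on $V$ such that $\mu$ is continuous in the corresponding Sazonov topology. Put $p_0:=\dP(|Z|\geq 1)>0$ for a standard normal variable $Z$. The point is a matching \emph{lower} bound: if $\langle bt,t\rangle\geq 1$, then $X$ is distributed as $\langle bt,t\rangle^{1/2}Z$ with $\langle bt,t\rangle^{1/2}\geq 1$, so $\mu(C_t)=\dP(|Z|\geq\langle bt,t\rangle^{-1/2})\geq p_0$. Now apply Sazonov‑continuity with $\varepsilon:=p_0/2<p_0$: there are $n\in\N$ and $\delta>0$ such that $\langle b_nt,t\rangle\leq\delta$ implies $\mu(C_t)\leq p_0/2$, which by the lower bound forces $\langle bt,t\rangle<1$. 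Taking the contrapositive and rescaling — for $t\neq 0$ apply it to $\langle bt,t\rangle^{-1/2}t$, which is legitimate since $\langle bt,t\rangle>0$ — one obtains $\langle bt,t\rangle\leq\delta^{-1}\langle b_nt,t\rangle$ for all $t\in V$. Summing over an orthonormal basis $(e_i)$ of $V$ yields $\sum_i\langle be_i,e_i\rangle\leq\delta^{-1}\sum_i\langle b_ne_i,e_i\rangle<\infty$, and hence $b$ is of trace class by the criterion recalled in Section~\ref{cylindersec}.

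The main obstacle is this necessity direction, and in particular identifying the right inequality: one cannot bound $\mu(C_t)$ from below by a constant multiple of $\langle bt,t\rangle$, because $\dP(|X|\geq 1)$ decays faster than any power of $\langle bt,t\rangle$ as $\langle bt,t\rangle\to 0$, so the naive estimate stalls. The argument above circumvents this by using only that $\mu(C_t)$ stays bounded away from $0$ once the variance reaches $1$, together with the scale‑invariance built into the Sazonov‑continuity condition; the remaining steps are routine.
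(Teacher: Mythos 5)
Your argument is correct, but it is genuinely different from what the paper does: the paper disposes of Proposition \ref{mstheorem} by citing \cite[Hilfssatz 2, p.\ 313]{gw4} and \cite[Chapter II, Theorem 2.1]{df}, whereas you derive it from Sazonov's theorem as stated in Theorem \ref{sazonov} (which the paper also only cites, so you have reduced one imported result to another, shorter one). Your preliminary computation of the one-dimensional marginal is right: with $\cF=\R t$ the base measure is the standard Gaussian of $(\R t,(\cdot,\cdot))$ pushed forward by $s\mapsto(s,t)$, giving $\mu(C_t)=\dP(|X|\geq 1)$ with $X\sim N(0,\langle bt,t\rangle)$, and $\langle bt,t\rangle>0$ for $t\neq 0$ because $b\geq 0$ and $\ker b=\{0\}$. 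Sufficiency via Chebyshev with the constant sequence $b_n:=b$ is immediate. The necessity direction is the one place where care is needed, and you handle it correctly: the implication $\langle b_nt,t\rangle\leq\delta\Rightarrow\langle bt,t\rangle<1$ obtained from the uniform lower bound $\mu(C_t)\geq p_0$ at variance $\geq 1$, combined with the common $2$-homogeneity of both quadratic forms, yields $\langle bt,t\rangle\leq\delta^{-1}\langle b_nt,t\rangle$ for all $t$, and summing over an orthonormal basis gives the trace bound via the criterion recalled in Section \ref{cylindersec}. Your closing remark correctly identifies why a naive linear lower bound on $\mu(C_t)$ in terms of the variance cannot work and why the scale-invariance argument is the right substitute. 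What your route buys is a self-contained proof within the framework the paper has already set up; what the paper's citation buys is independence from Sazonov's theorem (the classical proofs argue directly with characteristic functionals or tightness). One cosmetic point: you tacitly use the identification $V'\cong V$ built into the paper's formulation of Sazonov continuity, and the equivalence between $\sigma$-additivity on $\cC(V)$ and extendability to a measure on $\cB(V')$; both are stated in Section \ref{cylindersec}, so no gap results.
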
 
 \begin{proof}
 \cite[Hilfssatz 2, p. 313]{gw4} or \cite[Chapter II, Theorem 2.1]{df}.
 \end{proof}
 \begin{exa}
 Suppose that $\{e_k; k\in \N\}$ is an orthornormal basis of a Hilbert space $V$ and let\, $(b_k)_{k\in \N}$ a bounded sequence of positive  numbers. There is a unique   bounded  operator $b$ on $V$ defined by\,  $b e_k=b_ke_k$ for $k\in \N$. Obviously, $b$ is positive, self-adjoint and  $\ker b=\{0\}$. Let $(\cdot,\cdot)$ denote the scalar product  (\ref{bscalarprod}) on $V$ and $\mu_{(\cdot,\cdot)}$ the corresponding Gaussian cylinder measure on $V'\cong V$. By Theorem \ref{gaussmf},  $\mu_{(\cdot,\cdot)}$ is the {\it unique} representing cylinder measure for the moment functional $L$ (defined by (\ref{defLmuscal})) on $S(V)$.
 
 Clearly, $b$ is a trace class operator if and only if 
 \begin{align}\label{bkinl1}
 \sum_{k=1}^\infty b_k<+\infty.
\end{align} 
Therefore, by Proposition \ref{mstheorem}, the moment functional $L$ has a representing ($\sigma$-additive!) measure on $\cB(V')$ if and only if (\ref{bkinl1}) holds. Thus, if\, $\sum_k b_k=+\infty$, then the unique representing cylinder measure of $L$ is  not a measure.
\end{exa}


\begin{thebibliography}{99}
\bibitem{aljorkim}
D.\, Alpay, P.E.T. Jorgensen, and D.P. Kimsey:  Moment problems in an infinite number of variables, Infinite Dimensional Analysis, Quantum Probability and Related Topics {\bf 18}(2015), 1550024.
\bibitem{Ak}
N.I.\, Akhiezer:  The Classical Moment Problem,
Oliver and Boyd, Edinburgh, 1965.
\bibitem{ah}
S.\, Albeverio  and F. Herzberg: The moment problem on the Wiener space, Bull. Sci. Math.  {\bf 132}(2008), 7--18.
\bibitem{Arens}
R.\, Arens:  The algebra $L^\omega$ and convex topological rings, Bull. Amer. Math. Soc. {\bf 52}(1946), 931--935.

\bibitem{bk}
Y. M.\, Berezansky and Y.G. Kondratiev: Spectral Methods in Infinite-Dimensional Analysis, vol. 2, Kluwer Acad. Publishers, Dordrecht, 1995.\bibitem{bs}
Y. M.\, Berezansky and S.N. Sifrin: The generalized degree symmetric moment problem, Ukrain. Mat. Zh. {\bf 23}(1971), 291--306.
\bibitem{berg84}
C.\, Berg, J.P.R. Christensen, and P. Ressel:  Harmonic Analysis on Semigroups, Graduate Texts,
Springer-Verlag, New York,  1984.
\bibitem{bisgaard}
T.M.\, Bisgaard: Positive linear forms vanish on the radical, Positivity {\bf 4}(2000), 115--118.
\bibitem{by}
H. J.\, Borchers G and J. Yngvason: Integral representations for Schwinger functionals and the moment problem over nuclear spaces, Commun. Math. Phys. {\bf 43}(1975), 255--271.
\bibitem{by2}
H. J.\, Borchers G and J. Yngvason:  Necessary and sufficient conditions for integral representations of Wightman functionals at Schwinger points,  Commun. Math. Phys. {\bf 47}(1976), 197--214.
\bibitem{cs}
 J.L.\,  Challifour and S.P. Slinker: Euclidean field theory I. The moment problem, Commun. Math. Phys. {\bf 43}(1975), 41--58.

\bibitem{conway}
J.B. Conway:  A Course in Functional Analysis, Graduate Texts, Springer-Verlag, New York, 1990.


\bibitem{df}
Yu. L.\, Dalecky and S.V. Fomin: Measures and Differential Equations in Infinite-Dimensional Space, Kluwer Acad. Publishers, Dordrecht, 1991.
\bibitem{gik}
V.\, Gashemi, S. Kuhlmann and M. Marshall: Moment problem in infinitely many variables, Israel J. Math.  {\bf 212}(2016), 989--1012.
\bibitem{gw4}
I. M.\, Gelfand and N.J. Wilenkin: Verallgemeinerte Funktionen (Distributionen), DVW, Berlin, 1964.

\bibitem{heg}
G.C.\, Hegerfeldt: Extremal decomposition of Wightman functionals and of states on nuclear $*$-algebras by Choquet theory, Commun. Math. Phys. {\bf 45}(1975), 133-145.
\bibitem{km}
A. G.\, Kostyuchenko and B.S. Mityagin: Positive definite functionals on nuclear spaces, Trudy Mosk. Mat. Obshch. {\bf 9}(1960), 283--316.
\bibitem{ikr}
M.\, Infusino, T. Kuna and A. Rota: The full infinite dimensional moment problem on semi-algebraic sets of generalized functions, J. Funct. Anal.  {\bf 267}(2014), 1382--1418.

\bibitem{minlos}
R.A.\, Minlos: Generalized random processes and their extension to measures  (Russian), Trudy Moskov. Obsc. {\bf 8}(1959), 497--518.
\bibitem{sazonov}
V.\, Sazonov: A remark on characteristic functionals, Teor. Veroj. i. Prim.  {\bf 3}(1958), 201--205.




\bibitem{schaefer}
H.~ Sch\"afer: Topological Vector Spaces, Springer-Verlag, Berlin, 1972.
\bibitem{Sch1990}
K.~Schm\"udgen:
Unbounded operator algebras and representation theory, Birkh\"auser-Verlag, Basel, 1990.
\bibitem{Sch1991}
K.~Schm\"udgen:
The $K$-moment problem for compact semi-algebraic sets, Math. Ann. {\bf 289}(1991), 203--206.
\bibitem{Sch2010}
K.~Schm\"udgen:
Algebras of fractions and strict Positivstellens\"atze for $*$-algebras, J. Reine Angew. Math. {\bf 647}(2010), 57--86.
\bibitem{schm16}
 K. Schm\"udgen:  A fibre theorem for moments problems and some applications, Israel J. Math. {\bf 28}(2016), 43--66.

\bibitem{schm2017}
K.~Schm\"udgen: The Moment Problem,  Springer-Verlag, New York, 2017.
 
\bibitem{sifrin}
S.N. Sifrin: Infinite-dimensional  symmetric analogues of the Stieltjes moment problem, Ukrain. Mat. Zh. {\bf 26}(1974), 69291--306.
\bibitem{umemura}
Y.\,Umemura: Measures on infinite dimensional spaces, Publ. Res. Inst. Mat. Sci. Kyoto Univ. {\bf 1}(1965), 1--47 

\end{thebibliography}
\end{document}